\theoremstyle{plain}
\newtheorem{theorem}{Theorem}
\newtheorem*{theorem*}{Theorem}
\newtheorem{proposition}[theorem]{Proposition}
\newtheorem{lemma}[theorem]{Lemma}
\newtheorem{corollary}[theorem]{Corollary}
\theoremstyle{definition}
\newtheorem{definition}[theorem]{Definition}
\newtheorem{remark}[theorem]{Remark}
\newtheorem{example}[theorem]{Example}
\newtheorem{principle}[theorem]{Principle}
\newcommand		{\p}[1]	{\left(#1\right)}
\newcommand		{\abs}[1]{\left|#1\right|}
\newcommand    	{\R} {\mathbb R}
\newcommand 	{\F} {\mathbb F}
\newcommand    	{\Z} {\mathbb Z}
\newcommand 	{\im} {\mathrm{im}}
\newcommand 	{\mult} {\mathrm{mult}}
\newcommand 	{\homol} {\mathrm{H}}
\newcommand 	{\ve} {\varepsilon}
\newcommand 	{\cdim} {\dim_{\mathrm{c}}}
\newcommand 	{\andim} {\dim_{\mathrm{an}}}
\DeclareMathOperator*{\fin}{\Rightarrow}
\DeclareMathOperator*{\longfin}{\xRightarrow{\hspace*{1cm}}}
\title[Systems of Equations with Graph Structure]{Dimension Bounds for Systems of Equations with Graph Structure}
\author{Eddie Nijholt}
\author{Davide Sclosa}
\date{\today}
\begin{document}

\begin{abstract}
We introduce a broad class of equations that are described by a graph, which includes many well-studied systems.
For these, we show that the number of solutions (or the dimension of the solution set)
can be bounded by studying certain induced subgraphs.
As corollaries,
we obtain novel bounds in spectral graph theory on the multiplicities of graph eigenvalues,
and in nonlinear dynamical system on the dimension of the equilibrium set
of a network.
\end{abstract}

\maketitle


\section{Introduction} \label{sec:intro}
By Gaussian elimination, all solutions of a linear system of equations
can be found by assigning arbitrary values to certain free variables,
corresponding to the kernel, from which all the other variables are uniquely determined.
In particular, the dimension of the solution set is the minimum number of 
variables needed to uniquely determine the others.

The implicit function theorem extends this method to nonlinear systems, but it only
applies locally and under certain non-degeneracy conditions.
In fact, even for polynomial systems, determining the dimension of the solution set
is a non-trivial problem, especially if the field is not algebraically closed.

In this paper we show that if a system of equations is sufficiently structured,
then an upper bound on the dimension of the solution set can be obtained
in a purely combinatorial fashion. For this, we abandon the idea of functional dependence
and use relations instead.
Suppose a solution set has the property that, upon fixing the first $k$ coordinates,
the remaining coordinates can only take on finitely many values.
Then by any ``sensible'' definition of dimension, the solution set should have a most
dimension~$k$.

\begin{principle} \label{principle}
If a system of equations is sufficiently well described by a graph~$G$,
then the number of solutions, or the dimension of the solution set,
can be bounded from above by studying certain induced subgraphs of~$G$.
\end{principle}

Principle~\ref{principle}, which is deliberately vague, is the main
idea behind all results of the paper. Section~\ref{sec:def} makes things precise
by introducing the necessary definitions.
Section~\ref{sec:theorem} contains our main theorem,
which can be stated, for now informally, as follows.

\begin{theorem*} [Forest Bound, Informal]
Consider a system of equations in the variables~$(x_v)_{v\in V}$ that is ``sufficiently
compatible'' with a graph~$G=(V,E)$, in a way that will be made precise.
Let~$F\subseteq G$ be an induced forest.
Let~$L\subseteq F$ be obtained by choosing from each component of~$F$ all leaves except one.
If values~$x_v$ are fixed for the vertices in~$G\setminus F$ and in~$L$,
then there are only finitely many possible values for the remaining vertices.
In particular the solution set has dimension at most~$\abs{G}-\abs{F}+\abs{L}$.
\end{theorem*}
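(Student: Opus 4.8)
The plan is to deduce the dimension bound from the finiteness statement, so the real content is the combinatorial claim that fixing the coordinates indexed by $(G\setminus F)\cup L$ leaves only finitely many choices for the coordinates indexed by $F\setminus L$. Granting this, let $X$ denote the solution set and let $\pi$ be the projection that forgets the coordinates in $F\setminus L$, landing in the coordinate space indexed by $(G\setminus F)\cup L$. Since $L\subseteq F$, these index sets are disjoint, so the target has dimension $\abs{G}-\abs{F}+\abs{L}$. The finiteness statement says exactly that every fiber of $\pi|_X$ is finite, and a projection with finite fibers cannot raise dimension, whence $\dim X\le \abs{G}-\abs{F}+\abs{L}$. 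This last implication is the precise form of the guiding principle from the introduction, and I would isolate it as a lemma about the chosen notion of dimension.

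For the finiteness claim I would argue by a triangular elimination running along the forest. The key structural input is that $F$ is \emph{induced}: if $u,w\in F$ lie in different components of $F$ but are adjacent in $G$, then $uw$ would be an edge of $F$, a contradiction; hence no edge of $G$ joins two distinct components of $F$, and the elimination can be carried out in each tree component independently. The analytic input I need from compatibility is an \emph{edge-finiteness} property: for a vertex $v$ and a neighbour $w$, once $x_v$ and all variables $x_u$ with $u\sim v$ and $u\neq w$ are fixed, the equation attached to $v$ has only finitely many solutions in $x_w$. This is the property that, in the spectral application, reduces to nonvanishing of the edge weight, and it is what I expect the formal definition of Section~\ref{sec:def} to supply.

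Now fix a tree component $T$ of $F$ and root it at its distinguished leaf $r$, the unique leaf of $T$ not placed in $L$; every other vertex then has a well-defined parent. I process the non-root vertices of $T$ in order of \emph{decreasing} depth, and when I reach $v$ I use the equation at $v$ to pin down $x_{\mathrm{par}(v)}$. This is legitimate because, by the time $v$ is processed, every variable in its equation other than $x_{\mathrm{par}(v)}$ is already known: the neighbours of $v$ outside $F$ are fixed; the children of $v$ are deeper, hence already processed (or are leaves in $L$, hence fixed); and $x_v$ itself was determined when the first child of $v$ was processed, unless $v$ is a leaf in $L$, in which case it is fixed. Edge-finiteness then leaves finitely many values for $x_{\mathrm{par}(v)}$. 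The map $v\mapsto\mathrm{par}(v)$ ranges exactly over $V(T)\setminus L$, that is, the internal vertices of $T$ together with the distinguished leaf $r$, which is precisely the set we must determine; so after finitely many steps, each introducing only a finite multiplicity, all of $F\setminus L$ is confined to finitely many values. Combining the finitely many components yields the finiteness claim.

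The step I expect to be the main obstacle is matching the edge-finiteness property to the precise definition of compatibility and handling degenerate components. A single-vertex component contributes no forest edge, so the propagation above says nothing about it; such a vertex must instead be pinned down by its own equation, all of whose remaining variables lie in $G\setminus F$ and are fixed, which needs a finiteness statement in $x_v$ itself rather than in a neighbour. Verifying that the formal definition delivers exactly the directional finiteness invoked at each step, and that the bookkeeping of ``all leaves except one'' matches the count $\abs{G}-\abs{F}+\abs{L}$ in every component, including paths and stars, is where the argument must be made fully rigorous.
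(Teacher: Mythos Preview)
Your proposal is correct and essentially matches the paper's approach: the paper reduces to individual tree components via a relative-compatibility lemma for induced subgraphs, roots each component at the excluded leaf, and propagates from the leaves in $L$ toward the root via a Tree Lemma that formalizes exactly your decreasing-depth elimination, with your ``edge-finiteness'' property being precisely the paper's definition of $d$-compatibility. Your observation that single-vertex components require a stronger hypothesis (finiteness in $x_v$ itself rather than in a neighbour) also matches the paper's distinction between compatibility and strong compatibility, which is why the formal theorem excludes isolated vertices in $F$ unless strong compatibility is assumed.
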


Informally,
the theorem says that if the graph~$G$ contains a large induced forest with not too many leafs,
then the dimension of the solution set is small.
In the example of Figure~\ref{fig:induced_subforest} we have~$\abs{G}=15$, $\abs{F}=13$,
and~$\abs{L}=4$, thus we obtain the upper bound~$6$.

In Section~\ref{sec:combinatorics} we give applications
to spectral graph theory. We show that graphs containing
large induced subforests (e.g. long induced paths, large induced complete trees)
must have small eigenvalue multiplicities, thus a large number of distinct eigenvalues.

In Section~\ref{sec:dynamics} we give applications
to dynamical systems.
The idea is as follows:
After a network has reached an equilibrium, knowing the state of a few nodes
can be sufficient to determine the state of the whole system.
We obtain two bounds on the set of equilibria,
one based on tree subgraphs, one based on cycle subgraphs.

\begin{figure}
\centering
\begin{tikzpicture}[scale=1.3]
\node[circle,draw=black, fill=lightgray, fill opacity = 1, inner sep=2pt, minimum size=12pt] (1v) at (0,0) {};
\node[circle,draw=black, fill=black, fill opacity = 1, inner sep=2pt, minimum size=12pt] (2v) at (0.7,-1) {};
\node[rectangle, draw=black, fill=black, fill opacity = 1, inner sep=2pt, minimum size=12pt] (3v) at (1,0.1) {};
\node[circle,draw=black, fill=black, fill opacity = 1, inner sep=2pt, minimum size=12pt] (4v) at (0.83,1) {};
\node[circle,draw=black, fill=black, fill opacity = 1, inner sep=2pt, minimum size=12pt] (5v) at (1.8,-1.1) {};
\node[circle,draw=black, fill=black, fill opacity = 1, inner sep=2pt, minimum size=12pt] (6v) at (1.9,-0.1) {};
\node[circle,draw=black, fill=black, fill opacity = 1, inner sep=2pt, minimum size=12pt] (7v) at (2,0.81) {};
\node[rectangle,draw=black, fill=black, fill opacity = 1, inner sep=2pt, minimum size=12pt] (8v) at (2.9,-0.91) {};
\node[circle,draw=black, fill=black, fill opacity = 1, inner sep=2pt, minimum size=12pt] (9v) at (2.8,0.2) {};
\node[rectangle,draw=black, fill=black, fill opacity = 1, inner sep=2pt, minimum size=12pt] (10v) at (3,1) {};
\node[rectangle,draw=black, fill=black, fill opacity = 1, inner sep=2pt, minimum size=12pt] (11v) at (4,-1) {};
\node[circle,draw=black, fill=lightgray, fill opacity = 1, inner sep=2pt, minimum size=12pt] (12v) at (3.91,-0.11) {};
\node[circle,draw=black, fill=black, fill opacity = 1, inner sep=2pt, minimum size=12pt] (13v) at (4.05,0.96) {};
\node[circle,draw=black, fill=black, fill opacity = 1, inner sep=2pt, minimum size=12pt] (14v) at (4.95,-0.49) {};
\node[circle,draw=black, fill=black, fill opacity = 1, inner sep=2pt, minimum size=12pt] (15v) at (5.07,0.51) {};
\draw [-,   thick, gray, shorten <=-1pt, shorten >=-1pt] (1v) to  (2v);
\draw [-,   thick, gray, shorten <=-1pt, shorten >=-1pt] (1v) to  (3v);
\draw [-,   thick, gray, shorten <=-1pt, shorten >=-1pt] (1v) to  (4v);
\draw [-,   thick, gray, shorten <=-1pt, shorten >=-1pt] (1v) to  (5v);
\draw [-,   thick, gray, shorten <=-1pt, shorten >=-1pt] (12v) to  (8v);
\draw [-,   thick, gray, shorten <=-1pt, shorten >=-1pt] (12v) to  (9v);
\draw [-,   thick, gray, shorten <=-1pt, shorten >=-1pt] (12v) to  (11v);
\draw [-,   thick, gray, shorten <=-1pt, shorten >=-1pt] (12v) to  (13v);
\draw [-,   thick, gray, shorten <=-1pt, shorten >=-1pt] (12v) to  (14v);
\draw [-,   thick, gray, shorten <=-1pt, shorten >=-1pt] (12v) to  (15v);
\draw [-,   line width=1.5pt,  black, shorten <=-1pt, shorten >=-1pt] (5v) to  (2v);
\draw [-,   line width=1.5pt, black, shorten <=-1pt, shorten >=-1pt] (6v) to  (3v);
\draw [-,   line width=1.5pt, black, shorten <=-1pt, shorten >=-1pt] (7v) to  (4v);
\draw [-,   line width=1.5pt, black, shorten <=-1pt, shorten >=-1pt] (9v) to  (7v);
\draw [-,   line width=1.5pt, black, shorten <=-1pt, shorten >=-1pt] (8v) to  (5v);
\draw [-,   line width=1.5pt, black, shorten <=-1pt, shorten >=-1pt] (6v) to  (9v);
\draw [-,   line width=1.5pt, black, shorten <=-1pt, shorten >=-1pt] (9v) to  (13v);
\draw [-,   line width=1.5pt, black, shorten <=-1pt, shorten >=-1pt] (9v) to  (10v);
\draw [-,   line width=1.5pt, black, shorten <=-1pt, shorten >=-1pt] (13v) to  (15v);
\draw [-,   line width=1.5pt, black, shorten <=-1pt, shorten >=-1pt] (15v) to  (14v);
\draw [-,   line width=1.5pt, black, shorten <=-1pt, shorten >=-1pt] (11v) to  (14v);
\node[circle,draw=black, fill=lightgray, fill opacity = 1, inner sep=2pt, minimum size=12pt] (1v2) at (0,0) {};
\node[circle,draw=black, fill=black, fill opacity = 1, inner sep=2pt, minimum size=12pt] (2v2) at (0.7,-1) {};
\node[rectangle, draw=black, fill=black, fill opacity = 1, inner sep=2pt, minimum size=12pt] (3v2) at (1,0.1) {};
\node[circle,draw=black, fill=black, fill opacity = 1, inner sep=2pt, minimum size=12pt] (4v2) at (0.83,1) {};
\node[circle,draw=black, fill=black, fill opacity = 1, inner sep=2pt, minimum size=12pt] (5v2) at (1.8,-1.1) {};
\node[circle,draw=black, fill=black, fill opacity = 1, inner sep=2pt, minimum size=12pt] (6v2) at (1.9,-0.1) {};
\node[circle,draw=black, fill=black, fill opacity = 1, inner sep=2pt, minimum size=12pt] (7v2) at (2,0.81) {};
\node[rectangle,draw=black, fill=black, fill opacity = 1, inner sep=2pt, minimum size=12pt] (8v2) at (2.9,-0.91) {};
\node[circle,draw=black, fill=black, fill opacity = 1, inner sep=2pt, minimum size=12pt] (9v2) at (2.8,0.2) {};
\node[rectangle,draw=black, fill=black, fill opacity = 1, inner sep=2pt, minimum size=12pt] (10v2) at (3,1) {};
\node[rectangle,draw=black, fill=black, fill opacity = 1, inner sep=2pt, minimum size=12pt] (11v2) at (4,-1) {};
\node[circle,draw=black, fill=lightgray, fill opacity = 1, inner sep=2pt, minimum size=12pt] (12v2) at (3.91,-0.11) {};
\node[circle,draw=black, fill=black, fill opacity = 1, inner sep=2pt, minimum size=12pt] (13v2) at (4.05,0.96) {};
\node[circle,draw=black, fill=black, fill opacity = 1, inner sep=2pt, minimum size=12pt] (14v2) at (4.95,-0.49) {};
\node[circle,draw=black, fill=black, fill opacity = 1, inner sep=2pt, minimum size=12pt] (15v2) at (5.07,0.51) {};
\end{tikzpicture}
\caption{An induced subforest (black) of a graph (gray) and a set of selected
leaves (squares) obtained by
choosing from each component of the forest all leaves except one.
}
\label{fig:induced_subforest}
\end{figure}
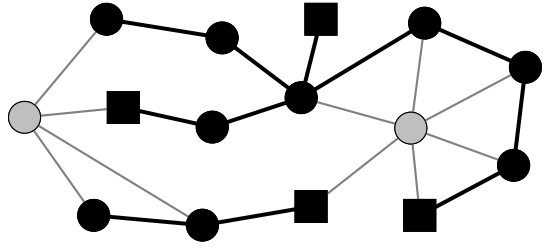


\section{Finite Determinacy, Compatibility, and Combinatorial Dimension} \label{sec:def}

Deriving a theorem from Principle~\ref{principle} requires introducing several definitions.
These definitions are the object of this section.

\subsection{Finite Determinacy}
In what follows~$V$ is a finite set of indexes.
Associated to each index~$v\in V$, there is a set~$X_v$ and a variable~$x_v$.
The variables~$(x_v)_{v\in V}$ will be used to represent statements about the sets~$(X_v)_{v\in V}$
as short formal expressions, which can be manipulated algebraically.
In applications, the variables~$(x_v)_{v\in V}$ will be those appearing
in a system of equations and~$X\subseteq \prod_{v\in V} X_v$ will be the solution set.

\begin{definition} \label{def:fin_det}
Let~$(X_v)_{v\in V}$ be a collection of sets,
let~$(x_v)_{v\in V}$ be a collection of variables,
and let~$X \subseteq \prod_{v\in V} X_v$ be any subset.
Given two subsets~$A,B\subseteq V$, we say that~$x_B = (x_v)_{v\in B}$
is $d$-\emph{determined} by~$x_A = (x_v)_{v\in A}$, and write
\[
	x_A \fin^d x_B,
\]
if for every element~$(\overline x_a)_{a\in A} \in \prod_{a\in A} X_a$
there are at most $d$~elements~$(\overline x_b)_{b\in B} \in \prod_{b\in B} X_b$
such that~$(\overline x_v)_{v\in V} \in X$
for some~$(\overline x_{u})_{u\in V\setminus (A\cup B)}
	\in \prod_{u\in V\setminus (A\cup B)} X_u$.
We say that~$x_B$ is \emph{finitely determined} by~$x_A$, and write~$x_A \fin x_B$,
if~$x_A \fin^d x_B$ for some~$d\geq 0$.
\end{definition}

Note that~$\fin$ depends on the set~$X$ and the inclusion~$X\subseteq \prod_{v\in V} X_v$.
The following example might clarify the definition.

\begin{example} \label{ex:torus}
Let $X$ be a two-dimensional torus, seen as a subset of $\R^4$ via
\begin{equation}
	X = \{(x_1,x_2,x_3,x_4) \in \R^4 \mid x_1^2+x_2^2 = x_3^2 + x_4^2= 1\}\, .
\end{equation}
Then $x_{\{1,3\}} \fin^4 x_{\{2,4\}}$, as for all fixed values of $x_1$ and $x_3$ we are left with at most two options for $x_2$ and $x_4$:
\begin{equation}
	x_2 = \pm \sqrt{1 - x_1^2}\, \, \text{ and } \, x_4 = \pm \sqrt{1 - x_3^2}\, ,
\end{equation}
with no options if $x_1^2>1$ and $x_3^2>1$, respectively. 
Note that we do not have~$x_{\{1,2\}} \fin x_{\{3,4\}}$, as there are in general still infinitely many possible values for $(x_3,x_4)$ when $x_1$ and $x_2$ are known.
\end{example}

\begin{remark}
In Definition~\ref{def:fin_det}, instead of writing $x_A \fin x_B$ we could have simply used the notation $A \fin B$. 
After all, the condition of being finitely determined by a subset of variables is ultimately just a statement about the index set~$V$, whenever the set~$X\subseteq \prod_{v\in V} X_v$ is fixed.
We have chosen for the former notation, however, as it allows us to make a distinction between relations in different sets. For instance, we can denote relations of finite determinacy in $X \subseteq \prod_{v\in V} X_v$ and $Y \subseteq \prod_{v\in V} Y_v$ by $x_A \fin x_B$ and $y_A \fin y_B$, respectively.
\end{remark}

\begin{remark}
An equivalent way of formulating Definition~\ref{def:fin_det} uses the projection maps 
\[
	\pi_A \colon \prod_{v\in V} X_v \to \prod_{v \in A}X_v, \quad
	(x_v)_{v\in V} \mapsto (x_v)_{v\in A},
\]
defined for all $A \subseteq V$. One verifies that $x_A \fin x_B$ if and only if for all $c \in \prod_{v \in A}X_v$ the set
\begin{equation}\label{projectionmapeq}
	\pi_B(\pi_A^{-1}(c) \cap X)
\end{equation}
is finite (with a uniform bound on the cardinality when ranging over $c$), and~$x_A \fin^d x_B$ if and only if the set has at most~$d$ elements.
We may generalize our definition of finite determinacy by dropping uniformity in~$c$.
Most statements in this paper remain true with this generalization, though we are not aware of any practical benefits.
\end{remark}

Finite determinacy has the following elementary properties.
We will refer to these properties by name or use them implicitly.

\begin{lemma}
Let~$X \subseteq \prod_{v\in V} X_v$ and~$A,B,C\subseteq V$. The following facts are true:
\begin{enumerate} [(i)]
\item (idempotence) $x_A \fin^1 x_A$
\item (monotonicity) if~$x_A \fin^n x_C$ and~$A\subset B$ then~$x_B \fin^n x_C$
\item (conjunction) if~$x_A \fin^n x_B$ and~$x_A \fin^{m} x_C$ then~$x_A \fin^{nm} x_{B\cup C}$
\item (transitivity) if~$x_A \fin^n x_B$ and~$x_B \fin^{m} x_C$ then~$x_A \fin^{nm} x_C$
\item (substitution) if~$x_A \fin^n x_B$ and~$x_C\fin^{m} x_D$,
then~$x_{(C\setminus B)\cup A} \fin^{nm} x_D$.
\end{enumerate}
\end{lemma}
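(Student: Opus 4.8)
The plan is to carry out the entire argument through the projection-map reformulation recorded in the remark above: writing $\pi_A\colon \prod_{v\in V}X_v \to \prod_{v\in A}X_v$ for the coordinate projection, the relation $x_A \fin^d x_B$ holds precisely when $\abs{\pi_B(\pi_A^{-1}(c)\cap X)} \le d$ for every $c \in \prod_{v\in A}X_v$. All five statements then reduce to elementary facts about images and preimages of coordinate projections, and I would set up two auxiliary observations first, since they do most of the work. Observation (a): if $D\subseteq A$ then $x_A \fin^1 x_D$, because every point of $\pi_A^{-1}(c)$ has its $A$-coordinates equal to $c$, hence its $D$-coordinates equal to the restriction $c|_D$, so $\pi_D(\pi_A^{-1}(c)\cap X)\subseteq\{c|_D\}$; idempotence (i) is the special case $D=A$. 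Observation (b): if $x_A \fin^n x_B$ and $C\subseteq B$, then $x_A \fin^n x_C$, since $\pi_C = \pi_C^B\circ\pi_B$ for the further projection $\pi_C^B\colon\prod_{v\in B}X_v\to\prod_{v\in C}X_v$, and applying a map cannot increase cardinality.

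For monotonicity (ii), with $A\subseteq B$ and common target $C$, I would observe that $\pi_B^{-1}(c')\subseteq\pi_A^{-1}(c'|_A)$ for every $c'\in\prod_{v\in B}X_v$, so $\pi_C(\pi_B^{-1}(c')\cap X)$ is contained in $\pi_C(\pi_A^{-1}(c'|_A)\cap X)$ and inherits the bound $n$. For conjunction (iii), the key point is that a point of $\prod_{v\in B\cup C}X_v$ is uniquely recovered from its $B$-coordinates together with its $C$-coordinates; hence, writing $S = \pi_{B\cup C}(\pi_A^{-1}(c)\cap X)$, the map $s\mapsto(\pi_B(s),\pi_C(s))$ embeds $S$ into $\pi_B(S)\times\pi_C(S)$, whose factors have at most $n$ and $m$ elements respectively, giving $\abs{S}\le nm$.

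For transitivity (iv), I would fix $c$ and decompose the fibre: every point of $\pi_A^{-1}(c)\cap X$ has $B$-projection among the at most $n$ values in $\pi_B(\pi_A^{-1}(c)\cap X)$, and therefore its $C$-projection lies in $\bigcup_{b}\pi_C(\pi_B^{-1}(b)\cap X)$, a union of at most $n$ sets each of size at most $m$. The resulting bound $nm$ is uniform in $c$ precisely because both hypothesised bounds are uniform, which is the one point I would flag explicitly.

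Finally, substitution (v) is the only part I expect to need real care, and the cleanest route is to chain the earlier items rather than argue from the definition. Writing $E=(C\setminus B)\cup A$: monotonicity (ii) upgrades $x_A \fin^n x_B$ to $x_E \fin^n x_B$; observation (a) gives $x_E \fin^1 x_{C\setminus B}$ since $C\setminus B\subseteq E$; conjunction (iii) combines these into $x_E \fin^n x_{B\cup C}$; observation (b) restricts the target, using $C\subseteq B\cup C$, to $x_E \fin^n x_C$; and transitivity (iv) applied to $x_E \fin^n x_C$ and the hypothesis $x_C \fin^m x_D$ yields $x_E \fin^{nm} x_D$, as claimed. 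The main obstacle throughout is the index bookkeeping around the overlap $B\cap C$—in particular keeping track of the identity $B\cup(C\setminus B)=B\cup C\supseteq C$—so that each product bound is applied to the correct block of coordinates.
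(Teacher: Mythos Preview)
Your proposal is correct and follows essentially the same approach as the paper. The paper dismisses (i)--(iv) in one line (``trivial'' and ``a simple counting argument'') and proves (v) by the same chain: from $x_{(C\setminus B)\cup A}\fin^1 x_{C\setminus B}$ and $x_{(C\setminus B)\cup A}\fin^n x_B$ it gets $x_{(C\setminus B)\cup A}\fin^n x_{B\cup C}$ by conjunction, then applies monotonicity to the hypothesis $x_C\fin^m x_D$ (enlarging the source to $B\cup C$) before finishing with transitivity. Your route differs only cosmetically at this last step---you shrink the target via your observation (b) to reach $x_E\fin^n x_C$ and then apply transitivity directly---but the underlying argument is the same.
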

\begin{proof}
Idempotence and monotonicity are trivial. Conjunction and transitivity follow from a simple
counting argument. Let us prove substitution.
By idempotence and monotonicity we have~$x_{(C\setminus B)\cup A} \fin^{1} x_{C\setminus B}$.
By hypothesis and monotonicity we have~$x_{(C\setminus B)\cup A} \fin^{n} x_B$.
Thus by conjunction
we have~$x_{(C\setminus B)\cup A} \fin^{n} x_{(C\setminus B)\cup B} = x_{C\cup B}$.
By hypothesis and monotonicity it follows that~$x_{C\cup B} \fin^m x_D$,
and thus by transitivity~$x_{(C\setminus B)\cup A} \fin^{nm} x_D$.
\end{proof}

We will often state results in the form~$x_{A} \fin^d x_{V\setminus A}$
for some subset~$A\subseteq V$. Note that this is equivalent to~$x_{A} \fin^d x_{V}$.

It will be convenient to introduce the following notation.

\begin{definition}
Let~$A,B,C\subseteq V$. We say that~$x_B$ is $d$-\emph{determined} by~$A$
relative to~$C$, and write~$x_A\fin^d_C x_B$, if~$x_{A\cup C}\fin^d x_B$.
\end{definition}

\subsection{Equations Compatible with a Graph}

We use standard terminology from graph theory.
A \emph{graph} is a pair of finite sets~$G=(V,E)$
such that the elements of~$E$ are subsets of~$V$ of size~$2$. We require that~$V$ is not empty.
We denote~$\abs{G}=\abs{V}$ the number of vertices in~$G$.
For every vertex~$v\in V$ we denote~$N(v)$ the neighborhood of~$v$ and~$\deg v=\abs{N(v)}$.
In our convention, we therefore always have $v \notin N(v)$.
By abuse of notation, especially when more than one graph is involved,
we will write~$v\in G$ instead of~$v\in V$.

In order to derive a theorem from Principle~\ref{principle},
we need to specify the meaning of ``well described by a graph''.
In the case of linear systems, there is a natural way to do it:
We might say that a linear system, described by a matrix~$M$, is \emph{linearly compatible}
with a graph~$G$ if~$M_{ij}\neq 0$ if and only if~$(i,j)$ is an edge of~$G$.
This is essentially the approach in~\cite{johnson2006eigenvalues}.
The natural generalization of this approach to nonlinear systems would
be through the implicit function theorem. However, the implicit function theorem
requires regularity, some non-degeneracy, and only applies locally.
Our approach avoids these issued entirely by defining compatibility
in terms of solution sets rather than equations.

\begin{definition} \label{def:compatible}
Let~$X \subseteq \prod_{v\in V} X_v$ be a subset and~$G=(V,E)$ be a graph.
We say that~$X$ and~$G$ are $d$-\emph{compatible} if
for every~$v\in V$ and every~$w\in N(v)$ we have
\[
		x_{(N(v)\setminus \{w\}) \cup \{v\}} \fin^d x_w.
\]
We say that~$X$ and~$G$ are \emph{strongly} $d$-\emph{compatible} if
moreover~$x_{N(v)}\fin^d x_v$ holds. We say that~$X$ and~$G$ are (strongly) compatible if they
are (strongly) $d$-compatible for some~$d\geq 0$.
\end{definition}

The notion of strong compatibility might appear more natural than compatibility,
and it does lead to simpler proofs and improved bounds. However, as we will see,
there are interesting systems in combinatorics
and dynamical systems that are naturally compatible with a graph,
but not strongly compatible with it.

Our first example comes from spectral graph theory. Definitions of adjacency matrix,
Laplacian, and normalized Laplacian,
can be found in~\cite{godsil2001algebraic, chung1997spectral}.

\begin{example} [Graph Spectrum] \label{ex:spectrum}
Let~$L$ be the Laplacian of a
graph~$G=(V,E)$ and fix~$\lambda\in \R$.
Consider the (possibly empty) eigenspace~$X = \{x\in \R^V \mid Lx = \lambda x\}$.
Let~$x \in X$. Then for every vertex~$v$
\[
	\sum_{w\in N(v)} x_w = (\deg v - \lambda) x_v.
\]
Therefore~$x_{(N(v)\setminus \{w\}) \cup \{v\}} \fin^1 x_w$
for every~$v\in V$ and every~$w\in N(v)$.
In particular~$X$ and~$G$ are $1$-compatible.
Moreover, unless~$\lambda = \deg v$ and~$\deg v$ is an eigenvalue for some~$v\in V$,
we have that~$X$ and~$G$ are $1$-strongly-compatible.
Similarly, eigenspaces of normalized Laplacian (resp. adjacency matrix) are
compatible with~$G$, and they are $1$-strongly-compatible
unless~$\lambda=1$ and~$1$ is an eigenvalue (resp.~$\lambda=0$ and~$0$ is an eigenvalue).
\end{example}

Our second example comes from dynamical systems.

\begin{example} [Network Dynamics] \label{ex:kuramoto}
Fix a graph~$G=(V,E)$. Let~$\Gamma$ be either~$\R$ or~$\R/2\pi \Z$.
For each edge~$e\in E$ let~$f_e:\Gamma\to \R$ be an odd function such
that~$\abs{f_e^{-1}(y)}\leq d<\infty$ for every~$y\in \R$, for some absolute constant~$d$.
Fix~$\omega \in \R^V$.
For every vertex~$v\in V$ let~$x_v\in \Gamma$ evolve according to the vector field
\begin{equation} \label{eq:network_dynamics}
	\dot x_v = \omega_v + \sum_{w\in N(v)} f_{\{w,v\}}(x_w-x_v).
\end{equation}
Then the set of equilibria~$X = \{x\in \Gamma^V : \dot x = 0\}$
is $d$-compatible with~$G$. In general~$X$ is not strongly compatible
(nilpotent equilibria on Eulerian graphs are counterexamples to
strong compatibility~\cite{sclosa2024completely}).
The particular case~$\Gamma=\R/2\pi \Z$ with~$f_{e}(x)=K \sin(x)$ and~$K>0$
is prototypical model of synchronization phenomena,
cf.~\cite{ashwin2016identical, Wiley2006, Ling2019, lu2020,
	Jafarian2018, Chen2019, acebron2005kuramoto, strogatz2000kuramoto};
in this case~$d=2$.
The particular case~$\Gamma = \R$ with~$\omega_v=0$ and~$f_{e}(x)=p(x)$
a non-constant odd polynomial, models polarization
in opinion dynamics,
cf.~\cite{devriendt2021nonlinear, homs2021nonlinear, srivastava2011bifurcations};
in this case~$d$ equals the degree of the polynomial~$p$.
\end{example}

\begin{definition}
Let~$G$ be a graph. Let~$H$ be a subgraph of~$G$ and~$C$ a subset of vertices.
Let~$X\subseteq \prod_{v\in G} X_v$.
We say that~$X$ and~$H$ are $d$-\emph{compatible}~\emph{relative to~$C$}
if for every~$v\in H$ and every~$w\in N_H(v)$
\[
	x_{(N_H(v)\setminus \{w\})\cup \{v\}} \fin^d_{C} x_w,
\]
where~$N_H(v)$ denotes the neighborhood of~$v$ in $H$.
Equivalently,~$X$ and~$H$ are $d$-compatible relative to~$H$
if for every~$c\in \prod_{v\in C} X_v$ the set~$X'\subseteq \prod_{v\in H} X_v$
defined by~$X' = \pi_H(X\cap \pi_C^{-1}(c))$ is $d$-compatible with~$H$.
Similarly, we say that$~X$ and$~H$ are strongly $d$-compatible relative to $C$ if they are compatible relative to $C$, and if in addition~$x_{N_H(v)} \fin^d_{C} x_v$ for all~$v\in H$.
\end{definition}

Recall that an induced subgraph of a graph~$G$ is any subgraph$~H$ obtained by first choosing any subset of vertices $W$ of~$G$, and then adding all edges of~$G$ that connect two vertices in $W$.
In contrast, a subgraph of~$G$ is defined as a subset of vertices, together with any subset of edges between these vertices. 
For instance, there may be many subgraphs of~$G$ consisting of all vertices, but only one such induced subgraph (the graph~$G$ itself).

Intuitively, the following result shows that an induced subgraph~$H$ of a
compatible graph~$G$ has the following property:
once values of~$(x_v)_{v\in G\setminus H}$
have been fixed, the system of equations in the remaining variables~$(x_v)_{H}$
(with~$(x_v)_{G\setminus H}$ considered as constants), is compatible with~$H$.

\begin{lemma} [Relative Compatibility of Induced Subgraphs] \label{lem:relative_induced}
Suppose that~$X\subseteq \prod_{v\in V(G)} X_v$ is $d$-compatible with~$G$.
Let~$H\subset G$ be an induced subgraph. Then~$X$ is $d$-compatible with~$H$
relative to~$G\setminus H$. The same result holds if compatible is replaced
by strongly compatible.
\end{lemma}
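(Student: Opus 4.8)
The plan is to unwind the definitions and show that each defining inequality of ``$X$ is $d$-compatible with $H$ relative to $G\setminus H$'' is an immediate consequence of the corresponding inequality coming from ``$X$ is $d$-compatible with $G$,'' using only the elementary properties of finite determinacy. Fix a vertex $v\in H$ and a neighbor $w\in N_H(v)$. Since $H$ is an \emph{induced} subgraph of $G$, the fact that $w\in N_H(v)$ means $w\in N(v)$ as well; this is the crucial point where inducedness enters, because it guarantees that $w$ is genuinely a neighbor of $v$ in the ambient graph $G$ and that the local equation relating them is available. Moreover, again by inducedness, $N_H(v) = N(v)\cap V(H)$, so $N_H(v)\subseteq N(v)$.

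With these observations in hand, the compatibility of $X$ with $G$ gives us, for this particular $v$ and $w$, the relation
\[
	x_{(N(v)\setminus\{w\})\cup\{v\}} \fin^d x_w.
\]
The goal is the relative statement $x_{(N_H(v)\setminus\{w\})\cup\{v\}} \fin^d_{G\setminus H} x_w$, which by definition of the relative notation unwinds to
\[
	x_{(N_H(v)\setminus\{w\})\cup\{v\}\,\cup\,(G\setminus H)} \fin^d x_w.
\]
Now I would compare the two index sets on the left. Because $N_H(v)=N(v)\cap V(H)$, every vertex of $N(v)$ either lies in $H$ (hence in $N_H(v)$) or lies outside $H$ (hence in $G\setminus H$). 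Consequently
\[
	(N(v)\setminus\{w\})\cup\{v\}
	\ \subseteq\
	(N_H(v)\setminus\{w\})\cup\{v\}\cup(G\setminus H),
\]
since every element of the left-hand set is either $v$, or $w$-excluded element of $N(v)$ inside $H$ (landing in $N_H(v)\setminus\{w\}$), or a neighbor outside $H$ (landing in $G\setminus H$). The one thing to check carefully is that $w$ is not accidentally reintroduced: $w\in H$ (as it belongs to $N_H(v)\subseteq V(H)$), so $w\notin G\setminus H$, and $w$ has been explicitly removed from $N_H(v)$, so indeed $w$ does not appear in the larger index set. Having established this inclusion, I apply \emph{monotonicity}: enlarging the determining set preserves a finite-determinacy bound, so from $x_{(N(v)\setminus\{w\})\cup\{v\}} \fin^d x_w$ we conclude $x_{(N_H(v)\setminus\{w\})\cup\{v\}\cup(G\setminus H)} \fin^d x_w$, which is exactly the desired relative compatibility. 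Since $v$ and $w$ were arbitrary, this proves the first claim.

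For the strong version I would argue identically, adding the extra hypothesis $x_{N(v)}\fin^d x_v$ for each $v\in H$. The required strong relative condition is $x_{N_H(v)}\fin^d_{G\setminus H} x_v$, i.e. $x_{N_H(v)\cup(G\setminus H)}\fin^d x_v$; and since $N(v)\subseteq N_H(v)\cup(G\setminus H)$ by the same decomposition of $N(v)$ as above, monotonicity applied to $x_{N(v)}\fin^d x_v$ yields the conclusion. I do not expect a genuine obstacle here: the proof is essentially a bookkeeping exercise in set inclusions followed by a single appeal to monotonicity. The only subtle point — and the one I would state explicitly to avoid a gap — is the use of inducedness to identify $N_H(v)$ with $N(v)\cap V(H)$; without this the inclusion of index sets could fail and the reduction to monotonicity would break down.
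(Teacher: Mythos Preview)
Your proof is correct and follows essentially the same route as the paper's: decompose $N_G(v)$ into $N_H(v)=N_G(v)\cap H$ and a piece inside $G\setminus H$ (using inducedness), then apply monotonicity. One small expository remark: the implication $w\in N_H(v)\Rightarrow w\in N_G(v)$ holds for any subgraph, not just induced ones; the place where inducedness is genuinely needed is exactly the identification $N_H(v)=N_G(v)\cap V(H)$, which you correctly flag at the end.
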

\begin{proof}
Let~$v\in H$ and~$w\in N_H(v)$. By hypothesis
\[
	x_{(N_G(v)\setminus \{w\}) \cup \{v\}} \fin^d x_w.
\]
Since$~H$ is induced, the set~$N_G(v)$ is the union of the sets~$N_G(v)\cap H = N_H(v)$
and~$N_G(v)\setminus H \subseteq G\setminus H$.
Therefore~$x_{(N_H(v)\setminus \{w\}) \cup (G\setminus H) \cup \{v\}} \fin x_w$,
which is the same as
\[
	x_{(N_H(v)\setminus \{w\}) \cup \{v\}} \fin_{G\setminus H}^d x_w.
\]
A similar argument applies to strong compatibility.
\end{proof}


\subsection{Combinatorial dimension}

Suppose that~$x_A\fin x_V$ for some subset~$A\subseteq V$.
Then, upon fixing~$\abs{A}$ coordinates, the remaining coordinates can only take
on finitely many values.
Therefore, by any ``sensible'' definition of dimension, the set~$X$ should have at most
dimension~$\abs{A}$. In this section, we introduce a notion of combinatorial dimension
for subsets of general set cartesian products.
Then we discuss its relation to other notions of dimension in
the case in which~$X$ has additional algebraic or topological structure.

\begin{definition} \label{def:dimension}
The \emph{combinatorial dimension} of~$X \subseteq \prod_{v\in V} X_v$, denoted by~$\cdim(X)$,
is the minimum cardinality of a subset~$A\subseteq V$ such that~$x_A \fin x_V$,
that is, it is the minimum number of coordinates needed to finitely determine all the others.
\end{definition}

Note that the notion of combinatorial dimension~$\cdim(X)$
is not an intrinsic property of the set~$X$, since it
depends on the inclusion~$X\subseteq \prod_{v\in V} X_v$, and
does not require a topology.

A subset~$X \subseteq \prod_{v\in V} X_v$ has combinatorial dimension~$0$ is and only if
it is finite. It has combinatorial dimension~$k$ if and only if there is a subset~$A\subseteq V$
of size~$\abs{A}=k$ such that~$x_A\fin x_V$ and no subset~$B\subseteq V$ of size~$\abs{B}<k$
has that property.

\begin{example}
For example, the set~$X$ in Example~\ref{ex:torus} has combinatorial dimension~$2$
since~$x_{\{1,3\}}\fin x_{\{1,2,3,4\}}$ and there is no~$i\in \{1,2,3,4\}$
such that~$x_i \fin x_{\{1,2,3,4\}}$.
\end{example}

The following proposition shows that for linear spaces over infinite fields,
combinatorial dimension and linear dimension are the same.
In particular, if~$\lambda\in \R$ is an eigenvalue of a graph~$G=(V,E)$
with multiplicity~$\mult(\lambda)$ as in Example~\ref{ex:spectrum}, then
the corresponding eigenspace~$X\subseteq \R^V$ satisfies~$\cdim(X) = \mult(\lambda)$.

\begin{proposition} [Linear Dimension] \label{prop:linear_dimension}
Let~$\F$ be an infinite field and~$X\subseteq \F^V$ a linear subspace.
Then
\[
	\dim_{\F} (X) = \cdim(X),
\]
where~$\dim_{\F}$ denotes the dimension of~$X$ as a linear subspace of~$\F^V$.
\end{proposition}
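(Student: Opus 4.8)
The plan is to establish the two inequalities $\cdim(X) \le \dim_{\F}(X)$ and $\cdim(X) \ge \dim_{\F}(X)$ separately, after first translating the finite-determinacy condition into linear-algebraic terms. Write $k = \dim_{\F}(X)$. For a subset $A \subseteq V$, the restriction $\pi_A|_X \colon X \to \F^A$ is a linear map, and each fibre $\pi_A^{-1}(c) \cap X$ is either empty or a coset of $\ker(\pi_A|_X)$. Using the reformulation of Definition~\ref{def:fin_det} in terms of projection maps, and noting that the final projection $\pi_V$ is the identity on $X$, the relation $x_A \fin x_V$ holds exactly when every such fibre is finite. The first step I would record is the crucial consequence of $\F$ being infinite: a coset of a linear subspace $W \subseteq \F^V$ is finite if and only if $W = \{0\}$, since any nonzero $w \in W$ generates the infinite line $\{t w : t \in \F\}$. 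Hence $x_A \fin x_V$ if and only if $\pi_A|_X$ is injective, and in that case in fact $x_A \fin^1 x_V$.

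For the lower bound, suppose $|A| < k$. Then $\pi_A|_X$ maps a $k$-dimensional space into $\F^A$, which has dimension $|A| < k$, so by rank-nullity its kernel is nontrivial and the fibre over any point of the image is infinite. Thus $x_A \not\fin x_V$ whenever $|A| < k$, and therefore $\cdim(X) \ge k$.

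For the upper bound, I would fix a basis $b_1, \dots, b_k$ of $X$ and assemble it as the rows of a $k \times |V|$ matrix $B$, which has rank $k$. Since column rank equals row rank, there is a set $A \subseteq V$ of $k$ columns whose submatrix $B_A$ is invertible. For $x = \sum_i c_i b_i \in X$ one has $\pi_A(x) = c^{\top} B_A$ with $c = (c_1, \dots, c_k)^{\top}$, so $\pi_A(x) = 0$ forces $c = 0$ and hence $x = 0$; thus $\pi_A|_X$ is injective and $x_A \fin^1 x_V$. This gives $\cdim(X) \le k$, and combining the two inequalities yields $\cdim(X) = k = \dim_{\F}(X)$.

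I expect the only genuine subtlety to be the step where infiniteness of $\F$ is used: the entire equivalence between finite determinacy and injectivity of the projection rests on the fact that a nonzero subspace over an infinite field has infinitely many elements, and the statement indeed fails over finite fields, where a projection with nontrivial kernel still has finite, hence ``finitely determined'', fibres. Everything else is standard linear algebra — the reformulation via fibres and the equality of row and column rank — and requires no special care.
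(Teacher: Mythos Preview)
Your proof is correct and follows essentially the same approach as the paper: both directions use that the projection $\pi_A|_X$ has finite fibres iff it is injective (via infiniteness of $\F$), and the upper bound is obtained in both cases by forming the matrix whose rows are a basis of $X$ and selecting $k$ linearly independent columns. Your write-up is somewhat more explicit about the equivalence and the rank--nullity step, but the argument is the same.
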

\begin{proof}
Suppose that~$x_A\fin x_V$ for some~$A\subseteq V$.
Then the canonical projection~$X\to \F^A$ has finite kernel. Since~$\F$ is infinite,
it follows that the kernel is~$\{0\}$, thus~$X\to \F^A$ is injective,
and in particular~$\dim_{\F}(X) \leq \abs{A}$. This proves~$\dim_{\F}(X) \leq \cdim(X)$.

Conversely, if~$\dim_{\F} (X)=k$, then any matrix with rows a linear basis of~$X$
contains~$k$ independent columns, say with indexes~$a_1,\ldots,a_k$,
and the set~$A=\{a_1,\ldots,a_k\}$ satisfies~$x_A\fin^1 x_V$.
This proves~$\cdim(X)\leq \dim_{\F} (X)$.
\end{proof}

A \emph{real-analytic set}~$X\subseteq \R^V$ is the solution set of a system
of real-analytic equations~\cite{massey2007notes, whitney1972complex, narasimhan2006introduction}.
Every real-analytic set is a locally-finite union of smooth manifolds, and
the \emph{analytic dimension}~$\andim(X)$ of~$X$ is the largest dimension of such a manifold.
The following remark shows that analytic dimension
is bounded from above by combinatorial dimension (but can be strictly smaller).
In particular, for the real-analytic set~$X$ of Example~\ref{ex:kuramoto}
we have~$\andim(X)\leq \cdim(X)$ (for simplicity, we state the result for~$\Gamma=\R$,
but the case~$\Gamma=\R/2\pi\Z$ is similar).

\begin{proposition} [Analytic Dimension] \label{prop:analytic_dimension}
Let~$X\subseteq \R^V$ be a real-analytic set. Then
\[
	\andim(X)\leq \cdim(X).
\]
Moreover, inequality can be strict.
\end{proposition}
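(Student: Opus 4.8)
The plan is to prove the inequality $\andim(X) \le \cdim(X)$ by a contrapositive-flavored dimension-counting argument, and then to exhibit a concrete real-analytic set for which the inequality is strict.

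For the inequality, let $k = \cdim(X)$, so there exists a subset $A \subseteq V$ with $\abs{A} = k$ such that $x_A \fin x_V$. By Definition~\ref{def:fin_det}, this means the projection $\pi_A \colon X \to \R^A$ has uniformly finite fibers. I would then appeal to the structure of real-analytic sets: $X$ is a locally finite union of smooth manifolds, and $\andim(X)$ is the largest dimension $m$ among these. Suppose for contradiction that $m > k$, and let $M \subseteq X$ be a smooth submanifold with $\dim M = m$. The restriction $\pi_A|_M \colon M \to \R^A$ is a smooth map from an $m$-dimensional manifold into a $k$-dimensional space with $m > k$. Since the target has dimension $k < m$, the map $\pi_A|_M$ cannot be locally injective anywhere—by the rank theorem (or simply because a smooth map to a lower-dimensional space has positive-dimensional fibers through generic points), some fiber $(\pi_A|_M)^{-1}(c)$ must be infinite (indeed positive-dimensional). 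But this fiber is contained in $\pi_A^{-1}(c) \cap X$, contradicting the uniform finiteness guaranteed by $x_A \fin x_V$. Hence $m \le k$, which is $\andim(X) \le \cdim(X)$.

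The main technical point to get right is passing from ``$\pi_A$ has finite fibers on $X$'' to ``$\pi_A|_M$ has finite fibers on the top-dimensional manifold $M$,'' and then to a genuine dimension bound. The cleanest route is: if $\pi_A|_M$ had image of dimension $< m$ at some point, the fiber would be positive-dimensional, hence infinite; and $\pi_A|_M$ cannot have image of dimension $> k$ since the target is $\R^A$ with $\abs{A} = k$. So the image has dimension at most $k$, forcing the generic fiber to have dimension at least $m - k > 0$ when $m > k$, which is the desired contradiction. I expect this rank-and-fiber bookkeeping—ensuring a positive-dimensional (hence infinite) fiber genuinely arises—to be the only real obstacle; it is standard but must be stated carefully to respect that $\andim$ is a maximum over the manifolds in the decomposition.

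For strictness, I would give a simple example where the analytic set has a top-dimensional piece of small dimension but the combinatorial dimension is forced to be larger by the coordinate embedding. A natural candidate is a real-analytic set that is a single point unioned with a curve arranged so that no single coordinate finitely determines the rest, e.g.\ $X = \{(x_1,x_2) \in \R^2 : x_1 x_2 = 0\}$, the union of the two coordinate axes: here $\andim(X) = 1$, but fixing $x_1 = 0$ leaves infinitely many values of $x_2$ and fixing $x_2 = 0$ leaves infinitely many values of $x_1$, so neither single coordinate finitely determines the other, giving $\cdim(X) = 2 > 1 = \andim(X)$. This illustrates that combinatorial dimension can exceed analytic dimension because it is sensitive to the particular product structure, not just the intrinsic geometry.
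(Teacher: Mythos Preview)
Your proof is correct and follows essentially the same approach as the paper: both restrict the projection $\pi_A$ to a top-dimensional smooth stratum $M\subseteq X$ and use a rank argument to conclude that finite fibers force $\dim M\le\abs{A}$, with the paper being slightly more explicit about first passing to the open subset of $M$ where the rank is maximal before invoking the Constant Rank Theorem. Your strictness example $X=\{x_1x_2=0\}\subseteq\R^2$ is exactly the one the paper uses.
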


\begin{proof}
Let~$A\subseteq V$ with~$\abs{A}=\cdim(X)$ such that~$x_A\fin x_V$,
that is, the canonical projection~$\pi: X \to \R^A$ has finite fibers.
Let~$M\subseteq X$ a manifold of dimension~$\andim(M) = \andim(X)$.
The restriction~$\pi: M \to \R^A$ is a smooth map between smooth manifolds.
By lower semi-continuity of rank, up to restricting to the open subset of~$M$ in which
rank is maximum, we can suppose that the differential of~$\pi: M\to \R^A$ has
constant rank~$r\geq 0$.
Clearly~$r\leq \abs{A}$. By the Constant Rank Level Set Theorem, each level set
of~$\pi: M\to \R^A$ is a closed submanifold of codimension~$r$. By hypothesis
level sets are finite, so~$r=\andim M$. We conclude that
\[
	\andim(X) = \andim M = r \leq \abs{A} = \cdim(X).
\]
To see that the inequality can be strict, note that
the real-analytic set~$X = \{(x_1,x_2)\in \R^2 : x_1 x_2=0\}$ satisfies~$\andim(X)=1$
but~$\cdim(X)=2$.
\end{proof}

The same proof shows that Proposition~\eqref{prop:analytic_dimension} remains true
if~$\R$ is replaced by~$\R/2\pi \Z$. More generally, if~$X\subseteq \R^V$
is replaced by~$X\subseteq M^V$, where~$M$ is a real-analytic manifold,
the conclusion becomes
\[
	\andim(X)\leq \andim(M)\cdot\cdim(X).
\]



\section{Main Theorem} \label{sec:theorem}
An induced forest~$F\subseteq G$ is an induced subgraph of~$G$
that is a forest as a graph on its own. The set of isolated
vertices of~$F$, that is, the set of vertices~$v\in F$ such that~$N_F(v)=\emptyset$
is an independent set of~$G$.

The goal of this section is proving the following theorem.


\begin{theorem} [Forest Bound] \label{thm:forest_bound}
Suppose that~$X\subseteq \prod_{v\in V} X_v$ and~$G=(V,E)$ are $d$-compatible.
Let~$F\subseteq G$ be an induced forest with no isolated vertices.
Let~$L\subseteq F$ be obtained by choosing from each component of~$F$
all leaves except one. Then
\begin{equation} \label{eq:forest_fin}
	x_{L \cup (G\setminus F)} \longfin^{d^{\abs{F\setminus L}}} x_{F\setminus L}
\end{equation}
and in particular the following bound holds:
\begin{equation} \label{eq:forest_bound}
	\cdim(X) \leq \abs{G} - \abs{F} + \abs{L}.
\end{equation}
If $X$~and~$G$ are strongly $d$-compatible,
the hypothesis that~$F$ has no isolated vertices can be dropped.
\end{theorem}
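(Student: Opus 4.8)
The plan is to reduce first to the case $G=F$ using relative compatibility, and then to enumerate the vertices of $F\setminus L$ in an order along which each one becomes $d$-determined by its predecessors. Since $F$ is an induced forest, Lemma~\ref{lem:relative_induced} yields that $X$ is $d$-compatible with $F$ relative to $C:=G\setminus F$ (and strongly so in the strong case). Hence every compatibility relation used below may be read relative to $G\setminus F$, and by the definition of relative determinacy the target \eqref{eq:forest_fin} is exactly the relative statement $x_{L}\fin^{d^{\abs{F\setminus L}}}_{G\setminus F} x_{F\setminus L}$. Once this is in hand, \eqref{eq:forest_bound} follows formally: combining $x_{L\cup(G\setminus F)}\fin x_{F\setminus L}$ with idempotence $x_{L\cup(G\setminus F)}\fin^1 x_{L\cup(G\setminus F)}$ via conjunction gives $x_{L\cup(G\setminus F)}\fin x_V$, so $\cdim(X)\le \abs{L\cup(G\setminus F)}=\abs{G}-\abs{F}+\abs{L}$.

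The heart of the argument is the choice of enumeration, which I would build one tree component at a time; the components are independent since $N_F(c)\subseteq T$ for every $c$ in a component $T$. In each component $T$, let $r\in T$ be the unique leaf omitted from $L$ and root $T$ at $r$. The two observations that make everything work are that the leaves of this rooted tree are precisely the leaves of $T$ other than $r$, namely $L\cap T$, and that \emph{consequently every vertex of $T\setminus L$ has at least one child} (the internal vertices do, and the root $r$, being a leaf of $T$, acquires exactly one child). I would then order $T\setminus L$ by strictly decreasing depth. When a vertex $v$ at depth $k$ is reached, all vertices of $T$ of depth $>k$ are already known: each is either an omitted-... rather, an included leaf lying in $L$, or a vertex of $T\setminus L$ processed earlier. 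Choosing any child $c$ of $v$, its neighbors in $F$ are $v$ together with the children of $c$, all of which lie at depth $>k$ and are therefore known. Centering $d$-compatibility at $c$ with excluded neighbor $w=v$ gives
\[
	x_{(N_F(c)\setminus\{v\})\cup\{c\}}\fin^d_{G\setminus F} x_v,
\]
so $v$ is $d$-determined by $L$ together with the earlier vertices, relative to $G\setminus F$.

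Concatenating these orderings over all components produces an enumeration $v_1,\dots,v_m$ of $F\setminus L$ with $m=\abs{F\setminus L}$ satisfying $x_{L\cup\{v_1,\dots,v_{i-1}\}}\fin^d_{G\setminus F} x_{v_i}$ for each $i$. Fixing a value of $x_{L\cup(G\setminus F)}$ and selecting the $x_{v_i}$ in order, each step admits at most $d$ values, so there are at most $d^m$ admissible tuples; equivalently, chaining the relations by conjunction and transitivity gives the factor $d^m$ and hence \eqref{eq:forest_fin}. For the strong statement, where $F$ may contain isolated vertices, such a vertex $v$ is its own component contributing nothing to $L$, and strong compatibility relative to $G\setminus F$ reads $x_{\emptyset}\fin^d_{G\setminus F} x_v$, that is $x_{G\setminus F}\fin^d x_v$; thus $v$ is $d$-determined outright and may be inserted anywhere in the enumeration, again contributing a single factor $d$, while all non-isolated components are treated exactly as above.

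The main obstacle is precisely the combinatorial insight of the second paragraph: realizing that one should root each component at the single omitted leaf and determine a vertex $v$ by centering compatibility not at $v$ but at one of its children $c$, feeding in $c$ and its remaining neighbors. What makes this succeed is that rooting at a genuine leaf $r$ leaves no vertex of $T\setminus L$ childless, and this is exactly the place where the hypothesis of no isolated vertices (or, in its absence, strong compatibility) enters. The remaining ingredients — the reduction via Lemma~\ref{lem:relative_induced}, the $d^m$ bookkeeping, and the passage to $\cdim$ — are routine.
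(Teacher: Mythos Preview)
Your proof is correct and follows essentially the same approach as the paper: reduce to $G=F$ via Lemma~\ref{lem:relative_induced}, root each component at the omitted leaf, and determine each $v\in T\setminus L$ from one of its children $c$ using $d$-compatibility centred at $c$. The paper packages the inductive step abstractly as a separate ``Tree Lemma'' (induction on distance from the maximal elements in a tree order), whereas you argue directly by decreasing depth, but the content and the exponent bookkeeping are identical.
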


Note that~$L \cup (G\setminus F) = G\setminus (F\setminus L)$, thus~\eqref{eq:forest_fin}
is the same as~$x_{L \cup (G\setminus F)} \fin^{d^{\abs{F\setminus L}}} x_{G}$.
In particular~\eqref{eq:forest_bound} follows immediately from~\eqref{eq:forest_fin}.

By taking~$G=F$ in Theorem~\ref{thm:forest_bound} we obtain the
following apparently weaker theorem.

\begin{theorem} [Pure Forest Bound] \label{thm:pure_forest_bound}
Suppose that~$X\subseteq \prod_{v\in V} X_v$ and~$G=(V,E)$ are $d$-compatible.
Suppose that~$G$ is a forest with no isolated vertices.
Let~$L\subseteq G$ be obtained by choosing from each component of~$G$
all leaves except one. Then
\[
	x_{L} \longfin^{d^{\abs{G\setminus L}}} x_{G\setminus L}
\]
and in particular the following bound holds:
\begin{equation} \label{eq:pure_forest_bound}
	\cdim(X) \leq \abs{L}.
\end{equation}
If~$X$ and~$G$ are strongly $d$-compatible, the hypothesis that~$G$ has no isolated
vertices can be dropped.
\end{theorem}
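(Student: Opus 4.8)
The plan is to prove the finite-determinacy relation $x_L \fin^{d^{\abs{G\setminus L}}} x_{G\setminus L}$ directly, by a leaf-peeling argument, and to read off the dimension bound as an immediate consequence: once $x_L \fin x_{G\setminus L}$ is established, this is equivalent to $x_L \fin x_G$, so $\cdim(X)\le \abs{L}$ by the very definition of combinatorial dimension. One could instead view this result as the special case $G=F$ of the Forest Bound (Theorem~\ref{thm:forest_bound}), but since the pure case is the combinatorial engine behind that theorem, I would prove it on its own.

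First I would reduce to a single tree. As $G$ is a forest, adjacency never couples distinct components, so the compatibility relations stay internal to each tree; it therefore suffices to produce a global order on $G\setminus L$ that, restricted to each component, respects the tree structure. Concretely, in each component $T$ I root the tree at its one retained leaf $r$, so that $r\in G\setminus L$ while every other leaf of $T$ lies in $L$, and I process the vertices of $(G\setminus L)\cap T$ in order of decreasing depth, deepest first. The statement I must verify is that each $v\in G\setminus L$ can be finitely determined, with factor $d$, from vertices that either lie in $L$ or have already been processed.

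The mechanism is the following. Every $v\in G\setminus L$ is either an internal vertex or the root, so in the rooted tree it has at least one child $c$. Applying $d$-compatibility at the vertex $c$ with distinguished neighbor $w=v$ yields $x_{(N(c)\setminus\{v\})\cup\{c\}} \fin^d x_v$, where $N(c)\setminus\{v\}$ is exactly the set of children of $c$. Since $c$ and its children sit strictly deeper than $v$, they are processed earlier or belong to $L$, hence are known when $v$ is treated. Writing $u_1,\dots,u_m$ for the resulting order of $G\setminus L$ and $B_i = L\cup\{u_1,\dots,u_i\}$, each step gives $x_{B_{i-1}} \fin^d x_{u_i}$ by monotonicity, hence $x_{B_{i-1}} \fin^d x_{B_i}$ by idempotence and conjunction; transitivity along the chain $B_0=L \subseteq \cdots \subseteq B_m = G\setminus L$ then accumulates the factor $d^m = d^{\abs{G\setminus L}}$.

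The main obstacle is verifying the ordering claim cleanly, namely that the child-based determination never references an as-yet-undetermined vertex. This is precisely where the choice of \emph{all leaves except one} per component enters: retaining a single leaf as the root guarantees both that every vertex outside $L$ possesses a child to serve as the compatibility center, and that the deepest vertices starting the induction are exactly the discarded leaves, which lie in $L$. For the strongly compatible variant, the only genuinely new case is an isolated vertex $v$, a single-vertex component with $N(v)=\emptyset$, which ordinary compatibility cannot reach. Here I would invoke strong compatibility $x_{N(v)}=x_{\emptyset}\fin^d x_v$, which by monotonicity gives $x_L \fin^d x_v$ and lets such a $v$ join the chain with one further factor of $d$; the hypothesis forbidding isolated vertices can thus be dropped without affecting the bound $d^{\abs{G\setminus L}}$.
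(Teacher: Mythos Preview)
Your argument is correct and follows essentially the same route as the paper: root each tree at its excluded leaf and determine every vertex of $G\setminus L$ from one of its children via $d$-compatibility, peeling from the leaves toward the root; the paper packages this induction as a separate Tree Lemma (inducting on distance to the maximal elements) whereas you inline it (inducting on depth), but the mechanism is identical. One small slip: your final chain should end at $B_m = L\cup(G\setminus L)=G$, not $G\setminus L$, though the desired conclusion $x_L\fin^{d^{\abs{G\setminus L}}} x_{G\setminus L}$ follows immediately.
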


Theorem~\ref{thm:forest_bound} implies Theorem~\ref{thm:pure_forest_bound}
by taking~$G=F$. We now show that, conversely, Theorem~\ref{thm:pure_forest_bound} implies
the apparently stronger Theorem~\ref{thm:forest_bound}.

\begin{proof}
[Proof that Theorem~\ref{thm:pure_forest_bound} implies Theorem~\ref{thm:forest_bound}]
Suppose that~$X$ is $d$-compatible with~$G$. Since~$F$ is an induced subgraph,
Lemma~\ref{lem:relative_induced} implies that~$X$ is $d$-compatible with~$F$
relative to~$G\setminus F$. By definition, this means that
for every fixed~$c\in \prod_{v\in G\setminus F} X_v$
the set~$X'\subseteq \prod_{v\in F} X_v$
defined by~$X' = \pi_F(X\cap \pi_{G\setminus F}^{-1}(c))$ is $d$-compatible with~$F$.
By Theorem~\ref{thm:pure_forest_bound} applied to the pair~$(X',F)$
we obtain~$x_L \fin^{d^{\abs{F\setminus L}}}_{(G\setminus F)} x_{F\setminus L}$,
which is the same as~$x_{L\cup (G\setminus F)} \fin^{d^{\abs{F\setminus L}}} x_{F\setminus L}$.
The same argument applies to strong compatibility.
\end{proof}

Before proceeding with the proof, let us discuss the content of these results.
Suppose that~$X$ and~$G$ are compatible.
Informally, Theorem~\ref{thm:forest_bound} states that if~$G$ contains a large induced
forest with not too many leaves, then the combinatorial dimension of~$X$ is small.
Let~$F\subseteq G$ be an induced forest with $l(F)$~leaves, $c(F)$~components, and
no isolated vertices. Then~\eqref{eq:forest_bound} is the same as
\begin{equation} \label{eq:weak_explicit}
	\cdim(X) \leq \abs{G} - \abs{F} + l(F) - c(F).
\end{equation}
In particular, for every induced subtree~$T$ with at least~$2$ vertices we have
\[
	\cdim(X) \leq \abs{G} - \abs{T} + l(T) - 1.
\]
In the particular case where~$G=F$ itself is a forest with no isolated vertices,
we obtain~$\cdim(X) < l(F)$: The combinatorial dimension of~$X$ is strictly smaller
than the number of leaves. In particular~$\cdim(X) < l(T)$ if~$G=T$ is a tree with at least
$2$~vertices.

Suppose now that~$X$ and~$G$ are strongly compatible, so that
induced forests are allowed to have isolated vertices.
Let~$(F\cup Z)\subseteq G$ be an induced forest with set of isolated vertices~$Z$.
From~\eqref{eq:forest_bound} we obtain the stronger upper bound
\begin{equation} \label{eq:strong_explicit}
	\cdim(X) \leq \abs{G} - \abs{F} + l(F) - c(F) - \abs{Z},
\end{equation}
compare~\eqref{eq:weak_explicit} with~\eqref{eq:strong_explicit}.
In particular, taking~$F=\emptyset$ gives
\[
	\cdim(X) \leq \abs{G} - \alpha(G)
\]
where~$\alpha(G)$ is the maximum size of an independent set in~$G$.
As we will see, for the stronger bound~\eqref{eq:strong_explicit} to hold, strong compatibility is
required.

In the proof of Theorem~\ref{thm:forest_bound}, we will consider rooted
trees, which are trees with a distinguished vertex.
There is a one-to-one correspondence between
rooted trees and tree orders, defined as follows.

\begin{definition}
Let~$T$ be a finite set.
A \emph{tree order} is a partial order~$(T,\prec)$
such that for every~$v\in W$
the set~$T_{\prec v} = \{w\in T \mid w \prec v\}$ is well-ordered,
and such that there is exactly one minimal element.
If~$v\prec w$ and there is no~$u$
such that~$v\prec u \prec w$, we call~$v$ the \emph{immediate predecessor} of~$w$
and~$w$ an \emph{immediate successor} of~$v$.
\end{definition}

We will also use the notation~$T_{\succ v} = \{w\in T \mid v\prec w\}$.
The following lemma is the key ingredient in the proof of Theorem~\ref{thm:forest_bound}.

\begin{lemma} [Tree Lemma] \label{lem:tree}
Let~$T$ be a finite set and~$X \subseteq \prod_{v\in T} X_v$.
Let~$\prec$ be a tree order on~$T$ and let~$M$ denote the set of maximal elements in~$(T,\prec)$.
Suppose that for every~$v\in T\setminus M$ and for every immediate successor~$w$ of~$v$
\begin{equation} \label{eq:tree}
	x_{\{w\} \cup T_{\succ w}} \fin^d x_v.
\end{equation}
Then~$x_{M} \fin^{d^{\abs{T\setminus M}}} x_{T\setminus M}$.
\end{lemma}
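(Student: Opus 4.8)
The plan is to determine the non-maximal vertices one at a time, moving down the tree from the maximal elements, while tracking how the finite-determinacy bound accumulates. Since $T$ is finite, I would first fix a reverse linear extension of the order: enumerate $T\setminus M=\{v_1,\dots,v_n\}$, where $n=\abs{T\setminus M}$, so that $v_j\succ v_i$ implies $j<i$. Intuitively this processes a non-maximal vertex only after every non-maximal vertex lying strictly above it has already been handled.

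The key local step is to establish, for each index $i$, the relation
\[
	x_{M\cup\{v_1,\dots,v_{i-1}\}} \fin^d x_{v_i}.
\]
To see this, note that $v_i\notin M$, so (using finiteness, which guarantees that a non-maximal element of a tree order has an immediate successor) $v_i$ has an immediate successor $w$, and hypothesis~\eqref{eq:tree} gives $x_{\{w\}\cup T_{\succ w}} \fin^d x_{v_i}$. Because $w\succ v_i$ and $\prec$ is transitive, we have $\{w\}\cup T_{\succ w}\subseteq T_{\succ v_i}$. Moreover every $u\succ v_i$ is either maximal, hence in $M$, or non-maximal, in which case $u=v_j$ with $v_j\succ v_i$ and therefore $j<i$; thus $T_{\succ v_i}\subseteq M\cup\{v_1,\dots,v_{i-1}\}$. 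Enlarging the determining set via monotonicity then yields the displayed relation.

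With this in hand I would finish by induction on $i$, proving $x_M \fin^{d^i} x_{M\cup\{v_1,\dots,v_i\}}$. The base case $i=0$ is idempotence. For the inductive step, combine the hypothesis $x_M \fin^{d^i} x_{M\cup\{v_1,\dots,v_i\}}$ with the local relation $x_{M\cup\{v_1,\dots,v_i\}} \fin^d x_{v_{i+1}}$: conjunction with idempotence upgrades the latter to $x_{M\cup\{v_1,\dots,v_i\}} \fin^d x_{M\cup\{v_1,\dots,v_{i+1}\}}$, and transitivity then produces the bound $d^{i+1}$. Taking $i=n$ gives $x_M \fin^{d^n} x_T$, which restricts to $x_M \fin^{d^{\abs{T\setminus M}}} x_{T\setminus M}$, as desired.

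I expect the only genuine subtlety to be setting up the enumeration correctly and verifying the containment $T_{\succ v_i}\subseteq M\cup\{v_1,\dots,v_{i-1}\}$; the remainder is careful bookkeeping of the exponents through the elementary properties of $\fin$. In particular it is worth double-checking that a non-maximal element of a finite tree order really does admit an immediate successor (which follows by taking a minimal element of the nonempty set of vertices strictly above it), since hypothesis~\eqref{eq:tree} is phrased precisely in terms of immediate successors.
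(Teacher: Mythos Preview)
Your proposal is correct and follows essentially the same approach as the paper: both process the non-maximal vertices top-down (the paper by induction on distance to $M$, you via a reverse linear extension), using the hypothesis at each step to pick up a factor of $d$. The only cosmetic difference is that the paper phrases the argument as an explicit counting of choices for $\overline{x}_v$, whereas you route everything through the formal properties (monotonicity, conjunction, transitivity) of $\fin$; the underlying idea is identical.
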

\begin{proof}
For every~$v\in T$ let~$\mathrm{dist}(v,M)$ denote the smallest integer~$m\geq 0$
such that there is a sequence of immediate successors~$v_0\prec v_1 \prec \cdots \prec v_m$
in~$T$ with~$v_0=v$ and~$v_m\in M$. We reason by induction on~$\mathrm{dist}(v,M)$.

Choose~$\overline x_{M} = (\overline x_{u})_{u\in M}$ arbitrarily.
Consider an element~$v\in T$ with~$\mathrm{dist}(v,M)=0$. Then~$v\in M$,
thus~$\overline x_v$ has already been chosen.
Consider an element~$v\in T$ such that~$\mathrm{dist}(v,M)=1$. Then~$v$ has an immediate
successor~$w\in M$. Since~$T_{\succ w}$ is empty, from~\eqref{eq:tree}
it follows that there are at most~$d$ many choices for~$\overline x_v$.
Make such a choice for each individual~$v\in T$ with~$\mathrm{dist}(v,M)=1$.

Now suppose that a choice~$\overline x_u \in X_u$
has been made for all~$u\in T$ with~$\mathrm{dist}(u,M)\leq n$.
Consider an element~$v\in T$ such that~$\mathrm{dist}(v,M)=n+1$. Then~$v$ has an
immediate successor~$w\succ v$.
Since every~$u\in \{w\}\cup T_{\succ w}$ satisfies~$\mathrm{dist}(u,M)\leq n$,
a choice has already been made for~$\overline x_{\{w\} \cup T_{\succ w}}$,
and from~\eqref{eq:tree} it follows that there are at most~$d$ many choices for~$\overline x_v$.

Once this procedure terminates, a choice has been made
for each~$v\in T\setminus M$ without repetitions.
It follows that for every fixed~$\overline x_{M}$
there are at most~$d^{\abs{T\setminus M}}$ choices for~$\overline x_{T\setminus M}$.
\end{proof}

We observed that Theorem~\ref{thm:forest_bound} and Theorem~\ref{thm:pure_forest_bound}
are equivalent. Let us prove the latter.

\begin{proof} [Proof of Theorem~\ref{thm:forest_bound}]
Let~$X$, $G$, and~$L$ be as in the statement.
Let~$T$ be a component of~$G$. Then~$T$ is a tree. The set~$T\cap L$ contains
all leaves of~$T$ except one. Let~$(T,\prec)$ denote the unique tree order
with minimum element the excluded leaf, and set of maximal elements~$M:=T\cap L$.
Choose a vertex $v \in T \setminus L$ and let $w \in T$ be an immediate successor of $v$.
Note that~$N(w) \subseteq \{v\} \cup T_{\succ w}$.
Since~$X$ and~$G$ are $d$-compatible,
we have~$x_{(N(w)\setminus \{v\}) \cup \{w\}} \fin^d x_v$,
thus by monotonicity~$x_{\{w\}\cup T_{\succ w}} \fin^d x_v$.
Lemma~\ref{lem:tree} implies
\[
	x_{T\cap L} \fin^{d^{\abs{T\setminus L}}}
		x_{T\setminus L}.
\]
Since~$T\cap L \subseteq L$, by monotonicity it follows
that~$x_{L} \fin^{d^{\abs{T\setminus L}}} x_{T\setminus L}$.
Now let~$T$ vary among the components of~$G$. Note that~$L = \bigcup_{T} (L\cap T)$
and that~$\sum_{T} \abs{T\setminus L} = \abs{G\setminus L}$.
Therefore, by the conjunction property it follows that
\[
	x_{L} \longfin^{\prod_T d^{\abs{T\setminus L}}}
		x_{\bigcup_T (T\setminus L)} = x_{G\setminus L},
\]
where~$\prod_T d^{\abs{T\setminus L}} = d^{\abs{G\setminus L}}$.
In particular, this implies~$\cdim(X) \leq \abs{L}$,
concluding the proof in the case of compatibility.
The argument in the case of strong compatibility is similar. It is enough do address
isolated vertices.
Let~$v\in G$ be an isolated vertex.
Since~$X$ is $d$-strongly compatible with~$G$, we have~$x_{N(v)} \fin^d x_v$.
Since~$N(v)=\emptyset$, this means that there are only $d$~possibilities for~$x_v$.
\end{proof}


\section{Applications to Spectral Graph Theory} \label{sec:combinatorics}
In this section we apply Theorem~\ref{thm:forest_bound} to spectral graph theory.
We obtain bounds
which relate eigenvalue multiplicities to induced
subgraphs. First, we state the corollary in full generality, then we discuss particular cases.

\begin{corollary} [Forest Bound for Graph Spectrum] \label{cor:spectrum_I}
Let~$\lambda$ be an eigenvalue of the Laplacian
(resp. normalized Laplacian, resp. adjacency matrix)
of a graph~$G$ and let~$\mult(\lambda)$ denote its multiplicity.
Let~$F$ be an induced forest in~$G$ with $l(F)$~leaves, $c(F)$~components and
no isolated vertices. Then
\begin{equation} \label{cor:forest_bound_spectrum_1}
	\mult(\lambda) \leq \abs{G} - \abs{F} + l(F) - c(F).
\end{equation}
In particular, if~$T$ is an induced tree with at least~$2$ vertices,
then~$\mult(\lambda) \leq \abs{G} - \abs{T} + l(T) - 1$.
Moreover, if~$\lambda\notin \{\deg v : v\in V\}$
(resp. $\lambda\neq 1$ for the normalized Laplacian,
resp. $\lambda \neq 0$ for the adjacency matrix)
and~$F \cup Z$ is an induced forest with set of isolated vertices~$Z$, then
\begin{equation} \label{cor:forest_bound_spectrum_2}
	\mult(\lambda) \leq \abs{G} - \abs{F} + l(F) - c(F) - \abs{Z}.
\end{equation}
In particular~$\mult(\lambda) \leq \abs{G}-\alpha(G)$,
where~$\alpha(G)$ denotes the largest size of an independent set of vertices.
\end{corollary}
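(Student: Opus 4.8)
The plan is to route everything through Theorem~\ref{thm:forest_bound} using the dictionary between eigenspaces and combinatorial dimension built up earlier. Fix the eigenvalue~$\lambda$ and let~$X = \{x\in \R^V \mid Lx = \lambda x\}$ be the corresponding eigenspace, with the obvious variants for the normalized Laplacian and the adjacency matrix. By Example~\ref{ex:spectrum}, the pair~$(X,G)$ is $1$-compatible; and since~$X$ is a linear subspace of~$\R^V$ over the infinite field~$\R$, Proposition~\ref{prop:linear_dimension} gives~$\cdim(X) = \dim_{\R}(X) = \mult(\lambda)$. Hence every combinatorial-dimension bound on~$X$ is immediately a multiplicity bound, and it suffices to bound~$\cdim(X)$.

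For the first inequality I would apply Theorem~\ref{thm:forest_bound} with~$d=1$ to the induced forest~$F$, which by hypothesis has no isolated vertices. The only arithmetic needed is the evaluation of~$\abs{L}$: since~$L$ retains from each component all leaves but one, a component with~$l_i$ leaves contributes~$l_i-1$ vertices, and summing over the~$c(F)$ components yields~$\abs{L} = l(F)-c(F)$. Substituting into~$\cdim(X)\leq \abs{G}-\abs{F}+\abs{L}$ produces~\eqref{cor:forest_bound_spectrum_1}. Specializing to an induced tree~$T$, where~$c(T)=1$ and the condition~$\abs{T}\geq 2$ rules out isolated vertices, gives the stated tree bound~$\mult(\lambda)\leq \abs{G}-\abs{T}+l(T)-1$.

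For the stronger inequality~\eqref{cor:forest_bound_spectrum_2} I would invoke the strong-compatibility clause of Example~\ref{ex:spectrum}: under the hypothesis~$\lambda\notin\{\deg v : v\in V\}$ (respectively~$\lambda\neq 1$,~$\lambda\neq 0$), the pair~$(X,G)$ is strongly $1$-compatible, so Theorem~\ref{thm:forest_bound} may be applied to the induced forest~$F\cup Z$ even though it has the isolated vertices~$Z$. Here I must recompute~$\abs{L}$ for the enlarged forest: each isolated vertex is a single-vertex component from which ``all leaves except one'' selects nothing, so~$Z$ contributes no vertices to~$L$ and one still has~$\abs{L}=l(F)-c(F)$. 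Because~$\abs{F\cup Z}=\abs{F}+\abs{Z}$, the bound~$\cdim(X)\leq \abs{G}-\abs{F\cup Z}+\abs{L}$ rearranges to exactly~\eqref{cor:forest_bound_spectrum_2}. Finally, taking~$F=\emptyset$ collapses the forest to the independent set~$Z$, giving~$\mult(\lambda)\leq \abs{G}-\abs{Z}$; choosing~$Z$ of maximum size~$\alpha(G)$ delivers the independence-number bound.

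The essential bookkeeping step is the identity~$\abs{L}=l(F)-c(F)$ together with the observation that isolated vertices add nothing to~$L$; this is precisely what converts the abstract quantity~$\abs{L}$ of Theorem~\ref{thm:forest_bound} into the leaf-minus-component expression of the corollary, and is where I would be most careful. Beyond this, the only thing to track is the correct pairing of the three spectral settings with their respective strong-compatibility exceptions, all of which are already recorded in Example~\ref{ex:spectrum}, so I do not anticipate any genuine obstacle.
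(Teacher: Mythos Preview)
Your proposal is correct and follows exactly the approach of the paper: invoke Example~\ref{ex:spectrum} for (strong) compatibility, use Proposition~\ref{prop:linear_dimension} to identify $\cdim(X)$ with $\mult(\lambda)$, and apply Theorem~\ref{thm:forest_bound}. The paper's own proof is terser and leaves the arithmetic $\abs{L}=l(F)-c(F)$ and the treatment of isolated vertices implicit (these are spelled out in the discussion surrounding \eqref{eq:weak_explicit} and \eqref{eq:strong_explicit}), but your explicit bookkeeping is accurate and matches what is needed.
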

\begin{proof}
In Example~\ref{ex:spectrum} we showed that the eigenspace~$X$ associated to an eigenvalue~$\lambda$
is compatible with~$G$,
and in particular strongly compatible if~$\lambda\notin \{\deg v : v\in V\}$
in the case of a Laplacian, with similar statements for the other matrices.
Moreover, by Proposition~\ref{prop:linear_dimension} we have~$\cdim(X)=\mult(\lambda)$.
Therefore,
the upper bounds~\eqref{cor:forest_bound_spectrum_1} and~\eqref{cor:forest_bound_spectrum_2}
follow immediately from Theorem~\ref{thm:forest_bound}. The last statement follows
by noting that an an independent set is the same as an induced forest without edges.
\end{proof}

Let us discuss some consequences.
The following result shows that graphs
containing large induced forests with not too many leaves must have many distinct eigenvalues.

\begin{corollary} [Number of Distinct Eigenvalues]
Let~$0<\ve<1$.
Let~$G$ be a graph and~$F\subseteq G$ an induced forest without isolated vertices such that
\[
	\abs{F}-l(F)+c(F)\geq \ve \abs{G}
\]
Let~$\ell$ denote the number of distinct Laplacian eigenvalues of~$G$. Then
\[
	\ell \geq \frac{1}{1-\ve}.
\]
The statement remains true if ``Laplacian'' is replaced by ``normalized Laplacian''
or by ``adjacency matrix''.
\end{corollary}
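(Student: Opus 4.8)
The plan is a short averaging argument built directly on top of Corollary~\ref{cor:spectrum_I}. First I would record the elementary spectral fact that underlies everything: the Laplacian (and likewise the normalized Laplacian and the adjacency matrix) is a real symmetric operator on $\R^V$, hence diagonalizable over $\R$, so the multiplicities of its distinct eigenvalues sum to the dimension of the ambient space,
\[
	\sum_{\lambda} \mult(\lambda) = \abs{G},
\]
where the sum ranges over all $\ell$ distinct eigenvalues. This is the only ingredient not already packaged in the earlier results.

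Next I would apply the forest bound uniformly. Since $F$ is by hypothesis an induced forest without isolated vertices, the bound~\eqref{cor:forest_bound_spectrum_1} of Corollary~\ref{cor:spectrum_I} holds for \emph{every} eigenvalue $\lambda$, giving
\[
	\mult(\lambda) \leq \abs{G} - \abs{F} + l(F) - c(F).
\]
The hypothesis $\abs{F}-l(F)+c(F)\geq \ve\abs{G}$ rewrites the right-hand side as at most $(1-\ve)\abs{G}$, so every single multiplicity is bounded by $(1-\ve)\abs{G}$, simultaneously and with no exceptions.

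Finally I would combine the two displays. Summing the uniform bound over the $\ell$ distinct eigenvalues and using the first identity yields
\[
	\abs{G} = \sum_{\lambda} \mult(\lambda) \leq \ell\,(1-\ve)\abs{G}.
\]
Dividing through by $\abs{G}>0$ (recall $V$ is nonempty) gives $1 \leq \ell(1-\ve)$, that is $\ell \geq \tfrac{1}{1-\ve}$, as desired. The normalized-Laplacian and adjacency-matrix cases are verbatim the same, since Corollary~\ref{cor:spectrum_I} supplies exactly the same bound~\eqref{cor:forest_bound_spectrum_1} for those operators.

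There is no genuine obstacle here; the argument is purely a pigeonhole/averaging step. The one point deserving care is the choice of which bound to feed in: I would deliberately use the weaker inequality~\eqref{cor:forest_bound_spectrum_1}, valid for all eigenvalues, rather than the sharper strong-compatibility bound~\eqref{cor:forest_bound_spectrum_2}, because the latter excludes certain exceptional values (e.g. $\lambda\in\{\deg v\}$ for the Laplacian) whereas the summation must range over every distinct eigenvalue at once. Using the universally valid bound is precisely what lets the averaging go through without a case distinction.
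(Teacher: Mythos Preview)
Your proposal is correct and follows essentially the same approach as the paper: apply Corollary~\ref{cor:spectrum_I} to bound every multiplicity by $(1-\ve)\abs{G}$, sum over the $\ell$ distinct eigenvalues to get $\abs{G}\leq \ell(1-\ve)\abs{G}$, and divide through. Your added remark about deliberately using~\eqref{cor:forest_bound_spectrum_1} rather than~\eqref{cor:forest_bound_spectrum_2} is a nice clarification, but the underlying argument is the same.
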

\begin{proof}
By Corollary~\ref{cor:spectrum_I}, for every eigenvalue~$\lambda$
we have~$\mult(\lambda) \leq (1-\ve) \abs{G}$. Therefore
\[
	\abs{G} = \sum_{\lambda\ \text{distinct}} \mult(\lambda) \leq \ell (1-\ve) \abs{G}.
\]
Dividing by~$\abs{G}$ completes the proof.
\end{proof}

The next result shows that graphs with an eigenvalue of large multiplicity
cannot contain long induced paths.

\begin{corollary} [Spectral Bound on Longest Induced Path]
Let~$k\geq 1$ be the maximum length of an induced path in~$G$. Then
\[
	k \leq 1 + \abs{G}- \max_{\lambda}\mult(\lambda)
\]
where~$\lambda$ ranges among Laplacian, normalized Laplacian, and adjacency eigenvalues of~$G$.
The upper bound is reached, for example, if~$G$ has at least one vertex and no edges.
\end{corollary}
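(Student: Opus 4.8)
The plan is to deduce this directly from Corollary~\ref{cor:spectrum_I}, taking the longest induced path itself as the induced forest. I would first rewrite the claimed inequality in the equivalent form
\[
	\max_{\lambda}\mult(\lambda) \leq 1 + \abs{G} - k,
\]
so that it suffices to show $\mult(\lambda)\leq 1+\abs{G}-k$ for every eigenvalue~$\lambda$; applying this to the eigenvalue of maximal multiplicity then yields the claim. Here I read the length of a path as its number of vertices, so that $k\geq 1$ always holds (a single vertex being a trivial induced path), consistent with the stated sharpness example.

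The main case is $k\geq 2$. An induced path~$P$ on $k$ vertices is, as a graph on its own, a tree: it is connected, so $c(P)=1$; its two endpoints are its only leaves, so $l(P)=2$; and it has no isolated vertices. Substituting~$F=P$ into the forest bound $\mult(\lambda)\leq \abs{G}-\abs{F}+l(F)-c(F)$ of Corollary~\ref{cor:spectrum_I} yields
\[
	\mult(\lambda)\leq \abs{G}-k+2-1 = 1 + \abs{G} - k
\]
for every eigenvalue~$\lambda$. This single input applies simultaneously to the Laplacian, normalized Laplacian, and adjacency spectra, since Corollary~\ref{cor:spectrum_I} treats all three.

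The degenerate case $k=1$ occurs exactly when~$G$ has no edges, because any edge $\{u,v\}$ already induces a path on two vertices. Then the claim reads $\max_{\lambda}\mult(\lambda)\leq \abs{G}$, which holds trivially since no multiplicity can exceed the ambient dimension~$\abs{G}$. For the edgeless graph all three matrices vanish, so~$0$ is an eigenvalue of multiplicity~$\abs{G}$, and the bound is attained with equality, confirming the sharpness statement.

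I do not anticipate a genuine obstacle: Corollary~\ref{cor:spectrum_I} does the real work, and the only points needing care are the bookkeeping of the path invariants $l(P)=2$ and $c(P)=1$, and the clean isolation of the $k=1$ case, where the path degenerates to an isolated vertex and the forest bound in its stated form (which forbids isolated vertices) no longer applies directly.
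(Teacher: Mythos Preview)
Your proof is correct and takes essentially the same approach as the paper: both apply Corollary~\ref{cor:spectrum_I} with the induced path itself as the forest~$F$, using $l(P)=2$ and $c(P)=1$ to obtain $\mult(\lambda)\leq \abs{G}-k+1$. Your explicit separation of the degenerate case $k=1$ (where the path is an isolated vertex and the forest bound does not directly apply) is a bit more careful than the paper's terse version, but the substance is identical.
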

\begin{proof}
Let~$P_k\subseteq G$ be an induced path of length~$k$. Corollary~\ref{cor:spectrum_I}
implies~$k-1 = \abs{P_k}-l(P_k)+1 \leq \abs{G}-\mult(\lambda)$.
\end{proof}

The \emph{complete binary tree}~$T_{2,k}$ of \emph{depth}~$k\geq 1$
is defined inductively as follows: $T_{2,1}$ is a graph with one vertex,
and~$T_{2,k+1}$ is obtained by attaching $2$~vertices to each leaf of~$T_{2,k}$.
In particular~$T_{2,k}$ has~$2^{k}-1$ vertices and~$2^{k-1}$ leaves.
The next result shows that graphs with an eigenvalue of large multiplicity
cannot contain large induced complete trees.

\begin{corollary} [Spectral Bound on Largest Induced Complete Tree]
Let~$k\geq 1$ be the maximum depth of an induced complete binary tree in~$G$. Then
\[
	k \leq 1 + \log_2 \p{\abs{G}-\max_{\lambda} \mult(\lambda)}
\]
where~$\lambda$ ranges among Laplacian, normalized Laplacian, and adjacency eigenvalues of~$G$.
The upper bound is reached, for example, if~$G$ is a complete binary tree of depth~$2$.
\end{corollary}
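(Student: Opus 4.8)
The plan is to feed the largest induced complete binary tree into the induced-tree case of Corollary~\ref{cor:spectrum_I}. First I would let $T = T_{2,k}$ be an induced complete binary tree of maximal depth $k$ inside $G$, and record its combinatorial parameters from the inductive definition: $T_{2,k}$ has $\abs{T} = 2^k - 1$ vertices and $l(T) = 2^{k-1}$ leaves. These are the only two quantities the bound depends on, so once they are in hand the rest is arithmetic.

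Assuming $k \geq 2$, so that $T$ has at least two vertices, the induced-tree bound from Corollary~\ref{cor:spectrum_I} gives
\[
	\mult(\lambda) \leq \abs{G} - \abs{T} + l(T) - 1
\]
for every eigenvalue $\lambda$ of the Laplacian, normalized Laplacian, or adjacency matrix of $G$. Substituting the parameters, the $2^k$ and $2^{k-1}$ terms combine: since $-\abs{T} + l(T) - 1 = -(2^k - 1) + 2^{k-1} - 1 = -2^{k-1}$, this collapses to
\[
	\mult(\lambda) \leq \abs{G} - 2^{k-1}.
\]
Rearranging yields $2^{k-1} \leq \abs{G} - \mult(\lambda)$; taking the maximum over $\lambda$ and then the base-$2$ logarithm produces $k - 1 \leq \log_2 \p{\abs{G} - \max_{\lambda} \mult(\lambda)}$, which is exactly the claimed inequality.

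The only delicate point is the boundary case $k = 1$, where $T_{2,1}$ is a single vertex and the induced-tree bound, which requires at least two vertices, does not apply directly; here the inequality holds as long as $\max_{\lambda} \mult(\lambda) < \abs{G}$, so this case reduces to checking that no single eigenvalue occupies the entire spectrum. For the equality claim, I would verify directly that $G = T_{2,2}$, the path on three vertices, attains the bound: its Laplacian spectrum is $\{0,1,3\}$ and its adjacency spectrum is $\{-\sqrt{2},0,\sqrt{2}\}$, both with maximal multiplicity $1$, so $1 + \log_2\p{3 - 1} = 2 = k$. No step presents a genuine obstacle; the main care is simply in correctly counting the vertices and leaves of the complete binary tree and in dispatching the degenerate $k=1$ case.
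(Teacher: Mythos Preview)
Your proposal is correct and follows essentially the same route as the paper: apply the induced-tree case of Corollary~\ref{cor:spectrum_I} to $T_{2,k}$, compute $\abs{T_{2,k}} - l(T_{2,k}) + 1 = 2^{k-1}$, and rearrange. You go somewhat further than the paper by explicitly flagging the $k=1$ edge case (which the paper's one-line proof silently ignores) and by verifying the equality example $G=T_{2,2}$.
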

\begin{proof}
Let~$T_{2,k}\subseteq G$ be an induced complete binary subtree of depth~$k$.
Corollary~\ref{cor:spectrum_I}
implies~$2^{k-1} = \abs{T_{2,k}}-l(T_{2,k})+1 \leq \abs{G}-\mult(\lambda)$.
\end{proof}

A similar upper bound can be obtained for $r$-ary trees with~$r>2$.
The next result is a spectral bound on~$\alpha(G)$, the size of the largest independent set in~$G$.

\begin{corollary} [Spectral Bound on Largest Independent Set] \label{cor:independent_set}
Let~$\lambda \notin \{\deg v: v\in V\}$ be an eigenvalue of the Laplacian of~$G$,
or let~$\lambda\neq 1$ be an eigenvalue of the normalized Laplacian of~$G$,
or let~$\lambda\neq 0$ be an eigenvalue of the adjacency matrix of~$G$. Then
\[
	\alpha(G) \leq \abs{G} - \mult(\lambda).
\]
\end{corollary}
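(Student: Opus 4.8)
The plan is to obtain this as an immediate special case of the strong-compatibility branch of Corollary~\ref{cor:spectrum_I}. The key observation is that an independent set of vertices is precisely an induced forest in which \emph{every} vertex is isolated. So I would take the induced forest $F\cup Z$ appearing in~\eqref{cor:forest_bound_spectrum_2} to be the one with $F=\emptyset$ and $Z$ equal to a maximum independent set of $G$, so that $\abs{Z}=\alpha(G)$. Since $F$ is empty we have $\abs{F}=l(F)=c(F)=0$, and the bound~\eqref{cor:forest_bound_spectrum_2} collapses to
\[
	\mult(\lambda)\leq \abs{G}-\abs{Z}=\abs{G}-\alpha(G).
\]

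First I would verify that the hypotheses of the strong branch of Corollary~\ref{cor:spectrum_I} are exactly the hypotheses stated here: namely $\lambda\notin\{\deg v: v\in V\}$ for the Laplacian, $\lambda\neq 1$ for the normalized Laplacian, and $\lambda\neq 0$ for the adjacency matrix. These are precisely the conditions under which Example~\ref{ex:spectrum} guarantees that the eigenspace $X$ is \emph{strongly} compatible with $G$, which is what licenses the use of induced forests with isolated vertices in~\eqref{cor:forest_bound_spectrum_2}. So no new casework is introduced; I am simply instantiating the three parallel statements with the forest $Z$ described above.

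There is essentially no obstacle here: the corollary is a direct specialization. The only point requiring a word of justification is that an independent set really does qualify as an ``induced forest with set of isolated vertices $Z$''—but this is immediate, since an induced subgraph on an independent set has no edges, hence is a forest in which every vertex is isolated. This is the same remark already used at the end of the proof of Corollary~\ref{cor:spectrum_I}. Thus the proof is a single invocation:

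\begin{proof}
Apply the bound~\eqref{cor:forest_bound_spectrum_2} of Corollary~\ref{cor:spectrum_I}
with the induced forest $F\cup Z$ given by $F=\emptyset$ and $Z$ a maximum independent
set of $G$, so that $\abs{Z}=\alpha(G)$. The hypotheses on $\lambda$ are exactly those
ensuring strong compatibility of the eigenspace with $G$, as shown in
Example~\ref{ex:spectrum}. Since $\abs{F}=l(F)=c(F)=0$, the bound reads
$\mult(\lambda)\leq \abs{G}-\alpha(G)$, as claimed.
\end{proof}
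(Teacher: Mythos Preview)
Your proof is correct and matches the paper's approach exactly: the paper's proof is the single line ``Follows immediately from Corollary~\ref{cor:spectrum_I},'' since the inequality $\mult(\lambda)\leq\abs{G}-\alpha(G)$ is already stated there as the specialization $F=\emptyset$, $Z$ a maximum independent set. Your justification of the hypotheses and the collapse of the bound is precisely the reasoning implicit in that reference.
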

\begin{proof}
Follows immediately from Corollary~\ref{cor:spectrum_I}.
\end{proof}

The following example shows that the extra condition on~$\lambda$
in Corollary~\ref{cor:independent_set} is necessary.
In particular, it shows that for the stronger bound to hold
in Theorem~\ref{thm:forest_bound}, strong compatibility is in general necessary.

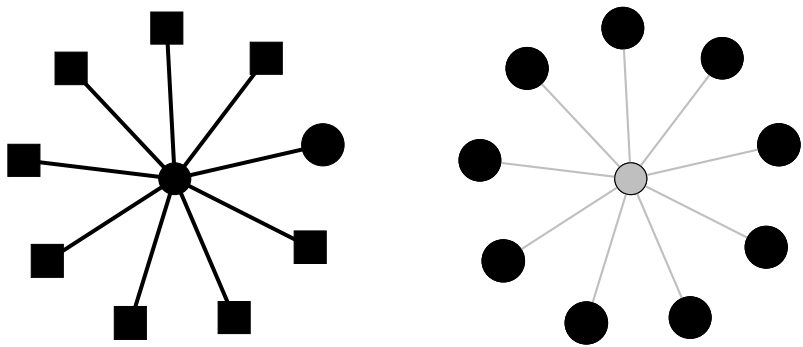
\begin{figure}
\centering
\begin{tikzpicture}[scale=1]
	\node[circle,draw=black, fill=black, fill opacity = 1, inner sep=2pt, minimum size=12pt] (m) at (0,0) {};
	\node[circle,draw=black, fill=black, fill opacity = 1, inner sep=2pt, minimum size=12pt](9v) at 		({2*cos(360+13)}, {2*sin(360 + 13)}) {9};

	\foreach \i in {1,...,8}{
		\node[rectangle,draw=black, fill=black, fill opacity = 1, inner sep=2pt, minimum size=12pt](\i v) at ({2*cos(360/9*\i+13)}, {2*sin(360/9*\i + 13)}) {\i};
		\draw [-,   line width=1.5pt,  black, shorten <=-1pt, shorten >=-1pt] (m) to  (\i v);}
		\draw [-,   line width=1.5pt,  black, shorten <=-1pt, shorten >=-1pt] (m) to  (9v);

\node[circle,draw=black, fill=gray, fill opacity = 0.3, inner sep=2pt, minimum size=12pt] (m2) at (0+6,0) {};
\foreach \i in {1,...,9}{
		\node[circle,draw=black, fill=black, fill opacity = 1, inner sep=2pt, minimum size=12pt](\i v2) at ({2*cos(360/9*\i+13)+6}, {2*sin(360/9*\i + 13)}) {\i};
		\draw [-,   thick,  lightgray, shorten <=-1pt, shorten >=-1pt] (m2) to  (\i v2);}
\node[circle,draw=black, fill=lightgray, fill opacity = 1, inner sep=2pt, minimum size=12pt] (m2) at (0+6,0) {};
\foreach \i in {1,...,9}{
		\node[circle,draw=black, fill=black, fill opacity = 1, inner sep=2pt, minimum size=12pt](\i v3) at ({2*cos(360/9*\i+13)+6}, {2*sin(360/9*\i + 13)}) {\i};}
\end{tikzpicture}
\caption{The star graph with~$n=10$ vertices, together with two examples of an induced subforest (black) with selected leaves (squares). The subforest on the left gives a bound of~$8$ on the multiplicity of any eigenvalue (in general,~$n-2$). The subforest on the right gives a bound of $1$, but does not apply to all eigenvalues due to isolated vertices.}
\label{fig:star_graph}
\end{figure}

\begin{example} \label{ex:star}
Let~$G$ be a star graph with~$n>3$ vertices, thus~$n-1$ leaves,
see Figure \ref{fig:star_graph}. Clearly~$\alpha(G)=n-1$.
Its Laplacian eigenvalues are~$0$ and $n$ with multiplicity~$1$,
and~$1$ with multiplicity~$n-2$. Note that~$1\in \{\deg v: v\in V\}$
and that~$\mult(1)+\alpha(G)=(n-2)+(n-1)>n$.
A similar argument applies to the normalized Laplacian and to the adjacency matrix.
\end{example}

If the graph~$G$ is a tree, Corollary~\ref{cor:spectrum_I} reduces
to a known bound~\cite[Theorem 2.3]{grone1990laplacian}~\cite{smith1970some}:

\begin{corollary} [Multiplicities in Trees]
Let~$T$ be a tree with at least~$2$ vertices. Let~$\lambda$ be a Laplacian eigenvalue.
Then~$\mult(\lambda) \leq l(T)-1$.
The statement remains true if ``Laplacian'' is replaced by ``normalized Laplacian''
or by ``adjacency matrix''.
\end{corollary}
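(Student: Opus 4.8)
The plan is to derive this directly from Corollary~\ref{cor:spectrum_I} by taking the ambient graph to be the tree itself. Since every graph is an induced subgraph of itself, $T$ is an induced subtree of $G := T$. Because $T$ is a tree on at least $2$ vertices, it is connected and has no isolated vertices, so it is a legitimate induced forest with exactly $c(T) = 1$ component. This reduces the problem to a clean substitution.

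Applying the bound $\mult(\lambda) \leq \abs{G} - \abs{F} + l(F) - c(F)$ from Corollary~\ref{cor:spectrum_I} with $F = G = T$, the term $\abs{G} - \abs{F}$ vanishes and $c(F) = 1$, leaving exactly $\mult(\lambda) \leq l(T) - 1$. The three matrix cases follow simultaneously and require no extra work: Corollary~\ref{cor:spectrum_I} already treats the Laplacian, normalized Laplacian, and adjacency matrix together through the ``(resp. $\ldots$)'' convention, which in turn rests on the compatibility established for all three in Example~\ref{ex:spectrum}.

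There is no genuine obstacle here; the only thing to verify is that $T$ qualifies as an induced forest with no isolated vertices under the stated hypotheses, which is immediate from connectedness. The one subtlety worth flagging is that we deliberately invoke the \emph{weak} (non-strong) form of the forest bound, since a Laplacian eigenvalue $\lambda$ may coincide with some vertex degree and hence $X$ need only be compatible, not strongly compatible, with $T$. This costs nothing: because $F = T$ already has no isolated vertices, the isolated-vertex refinement that distinguishes the strong from the weak bound plays no role, and the weak hypothesis suffices.
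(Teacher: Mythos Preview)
Your proof is correct and essentially identical to the paper's: both take $G=F=T$ in the bound~\eqref{cor:forest_bound_spectrum_1} of Corollary~\ref{cor:spectrum_I}, use that a tree on at least two vertices has no isolated vertices and $c(T)=1$, and read off $\mult(\lambda)\leq l(T)-1$. Your additional remark that only the weak (non-strong) compatibility is needed here is accurate and worth noting, though the paper's proof omits it.
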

\begin{proof}
Since~$T$ has at least one edge, we can take~$T=F$ in~\eqref{cor:forest_bound_spectrum_1}.
\end{proof}


\section{Applications to Dynamical Systems} \label{sec:dynamics}
This section applies Theorem~\ref{thm:forest_bound} to the dynamical
systems of Example~\ref{ex:kuramoto}, which we briefly reintroduce for convenience
of the reader.
Fix a graph~$G=(V,E)$. Let~$\Gamma$ be either~$\R$ or~$\R/2\pi \Z$.
For each edge~$e\in E$ let~$f_e:\Gamma\to \R$ be an odd function such
that~$\abs{f_e^{-1}(y)}\leq d<\infty$ for every~$y\in \R$, where~$d$ is an absolute constant.
Fix~$\omega \in \R^V$.
For every vertex~$v\in V$ let~$x_v\in \Gamma$ evolve according to the vector field
\begin{equation} \label{eq:network_dynamics_bis}
	\dot x_v = \omega_v + \sum_{w\in N(v)} f_{\{w,v\}}(x_w-x_v).
\end{equation}
Then the set of equilibria~$X = \{x\in \Gamma^V : \dot x = 0\}$
is $d$-compatible with~$G$.
Recall that~$\andim(X)$ denotes the analytic dimension of~$X$,
that is, the maximum dimension of a manifold contained in~$X$
(the set~$X$ might contain singularities, e.g. two manifolds of equilibria might intersect,
but singularities are nowhere dense in~$X$).

\begin{corollary} [Forest Bound for Network Dynamics] \label{cor:kuramoto}
Let~$X \subseteq \Gamma^V$ be the set of equilibria
of the vector field~\eqref{eq:network_dynamics_bis} on a given graph~$G=(V,E)$.
Let~$F$ be an induced forest in~$G$ without isolated vertices. Then the analytic dimension of~$X$
satisfies
\[
	\andim(X) \leq \abs{G} - \abs{F} + l(F) - c(F).
\]
In particular, the right hand side is an upper bound to the maximum dimension of a
manifold of equilibria.
\end{corollary}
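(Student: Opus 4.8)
The plan is to deduce this corollary by chaining together three facts already in hand: the $d$-compatibility of the equilibrium set with the underlying graph, the Forest Bound of Theorem~\ref{thm:forest_bound} bounding the combinatorial dimension, and the inequality $\andim \leq \cdim$ from Proposition~\ref{prop:analytic_dimension}. No new combinatorics or analysis is needed; the work is entirely in verifying the hypotheses of each ingredient and assembling them in the right order.

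First I would record that the equilibrium set $X = \{x \in \Gamma^V : \dot x = 0\}$ is $d$-compatible with $G$, which is precisely the content of Example~\ref{ex:kuramoto} and was restated at the opening of this section. Applying Theorem~\ref{thm:forest_bound} to the induced forest $F$ (which has no isolated vertices, as required in the compatible, non-strong case) yields $\cdim(X) \leq \abs{G} - \abs{F} + \abs{L}$, where $L$ selects all leaves but one from each component of $F$. I would then rewrite $\abs{L} = l(F) - c(F)$, exactly as in the derivation of~\eqref{eq:weak_explicit}: each of the $c(F)$ components discards a single leaf from its total, and since $F$ has no isolated vertices every component is a tree on at least two vertices and hence has at least two leaves, so this count is legitimate. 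This gives $\cdim(X) \leq \abs{G} - \abs{F} + l(F) - c(F)$.

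It then remains to pass from combinatorial to analytic dimension. Here I would note that in the real-analytic instances of~\eqref{eq:network_dynamics_bis} (for example $f_e = K\sin$ or $f_e$ a non-constant odd polynomial), the defining equations $\dot x_v = 0$ are real-analytic, so $X$ is a real-analytic set and $\andim(X)$ is well defined. Proposition~\ref{prop:analytic_dimension} then gives $\andim(X) \leq \cdim(X)$, and combining with the previous display yields the claimed bound. For the circle case $\Gamma = \R/2\pi\Z$ one invokes the remark following Proposition~\ref{prop:analytic_dimension}, which extends the comparison to real-analytic manifolds (with the factor $\andim(M) = 1$ contributing nothing).

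I expect the only genuinely delicate point to be the verification that $X$ is a bona fide real-analytic set, so that $\andim$ is defined and Proposition~\ref{prop:analytic_dimension} is applicable. This is immediate once one assumes the coupling functions $f_e$ are real-analytic, which holds in all the prototypical models; the passage to $\Gamma = \R/2\pi\Z$ merely requires citing the stated extension of the proposition to analytic manifolds. Every other step is a direct substitution, so I do not anticipate any serious obstacle beyond this bookkeeping.
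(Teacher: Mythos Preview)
Your proposal is correct and follows exactly the same three-step route as the paper: invoke Example~\ref{ex:kuramoto} for $d$-compatibility, apply Theorem~\ref{thm:forest_bound} to bound $\cdim(X)$, and then use Proposition~\ref{prop:analytic_dimension} to pass to $\andim(X)$. Your version is in fact more careful than the paper's, which simply says ``Theorem~\ref{thm:forest_bound} applies'' without spelling out the $\abs{L}=l(F)-c(F)$ rewriting or the real-analyticity issue for the circle case.
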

\begin{proof}
By Proposition~\ref{prop:analytic_dimension}, it is enough to prove the upper bound
for the combinatorial dimension~$\cdim(X)$.
Example~\ref{ex:kuramoto} shows that
the equilibrium set~$X$ is compatible with~$G$.
Theorem~\eqref{thm:forest_bound} applies.
\end{proof}

In the remainder of this section, we obtain a second upper bound on~$\andim(X)$
based on ``forests of cycle subgraphs'' rather than on ``forests of vertices''.
To properly introduce the result, we need to recall some basic notions
from graph homology~\cite{diestel2024graph, hatcher2005algebraic}.

Fix, once for all, an arbitrary orientation of each edge~$e\in E$.
Let~$B \in \{+1,-1,0\}^{V\times E}$ be the incidence matrix of~$G$ for the given
orientation, defined by
\[
	B_{v, (w,u)} =
	\begin{dcases*}
		1, & if $v=u$ \\
		-1, & if $v=w$ \\
		0, & otherwise.
	\end{dcases*}
\]
Then the vector field~\eqref{eq:network_dynamics_bis} can be rewritten compactly as
\[
	\dot x = \omega - B f(B^t x),
\]
where~$f(y_{e_1},\ldots,y_{e_{\abs{E}}})
	= (f_{e_1}(y_{e_1}),\ldots,f_{e_{\abs{E}}}(y_{e_{\abs{E}}}))$,
and can be decomposed into maps
\[
	B^t: \Gamma^V\to \Gamma^E, \quad f:\Gamma^E \to \R^E, \quad B: \R^E \to \R^V.
\]
The kernel~$\homol_0(G) := \ker B^t \subseteq \Gamma^V$, known as \emph{zeroth homology group},
has dimension~$c(G)$ and contains the vectors
that are constant in each connected component.
Every oriented cycle in~$G$ (with no repeated edges)
can be encoded as a vector~$c_i$ in~$\R^E$ such that~$c_{i,e}=+1,-1,0$ if~$c_i$
traverses the oriented~$e$ in its orientation, the opposite orientation,
or does not contain~$e$ respectively.
Then the kernel~$\homol_1(G) := \ker B \subseteq \R^E$,
known as \emph{first homology group},
is generated by~$\abs{E}-\abs{V}+c(G)$ oriented cycles.

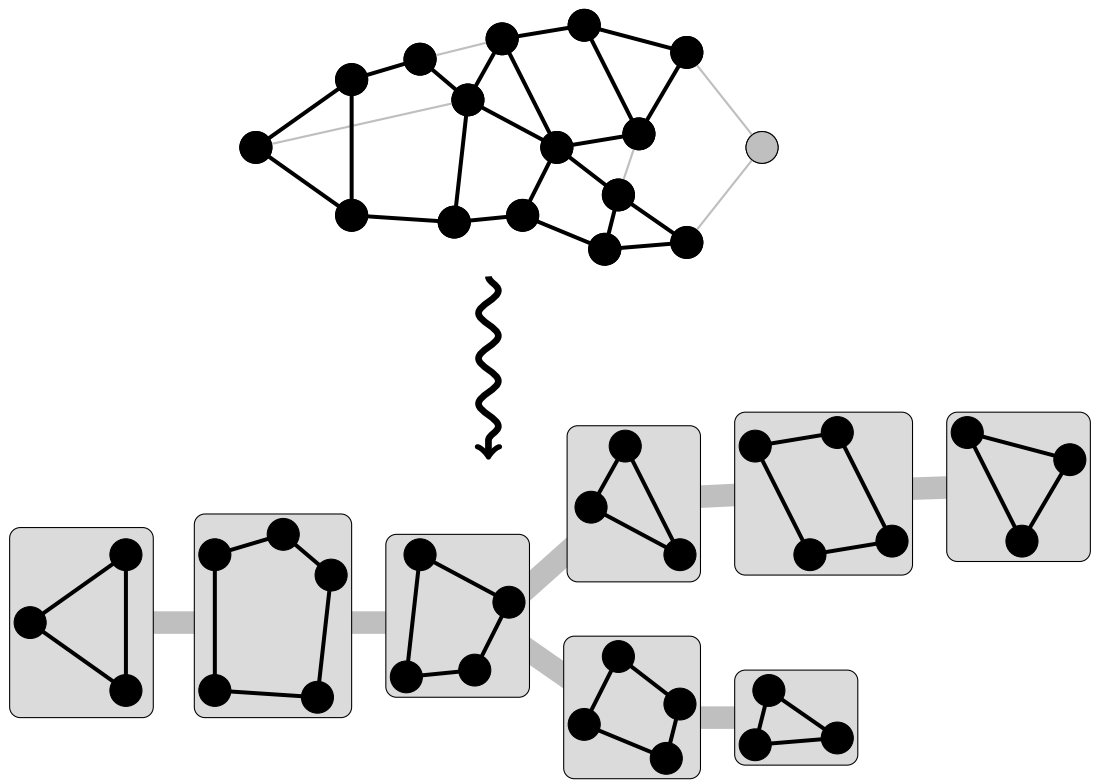
\begin{figure}
\centering
\begin{tikzpicture}[scale=0.9]
\node[circle,draw=black, fill=black, fill opacity = 1, inner sep=2pt, minimum size=12pt] (1v) at (-0.4,0+1) {};
\node[circle,draw=black, fill=black, fill opacity = 1, inner sep=2pt, minimum size=12pt] (2v) at (1,-1+1) {};
\node[circle, draw=black, fill=black, fill opacity = 1, inner sep=2pt, minimum size=12pt] (3v) at (1,1+1) {};
\node[circle,draw=black, fill=black, fill opacity = 1, inner sep=2pt, minimum size=12pt] (4v) at (2,1.3+1) {};
\node[circle,draw=black, fill=black, fill opacity = 1, inner sep=2pt, minimum size=12pt] (5v) at (2.5,-1.1+1) {};
\node[circle,draw=black, fill=black, fill opacity = 1, inner sep=2pt, minimum size=12pt] (6v) at (2.7,0.7+1) {};
\node[circle,draw=black, fill=black, fill opacity = 1, inner sep=2pt, minimum size=12pt] (7v) at (4,0+1) {};
\node[circle,draw=black, fill=black, fill opacity = 1, inner sep=2pt, minimum size=12pt] (8v) at (3.5,-1+1) {};
\node[circle,draw=black, fill=black, fill opacity = 1, inner sep=2pt, minimum size=12pt] (9v) at (3.2,1.6+1) {};
\node[circle,draw=black, fill=black, fill opacity = 1, inner sep=2pt, minimum size=12pt] (10v) at (5.2,0.2+1) {};
\node[circle,draw=black, fill=black, fill opacity = 1, inner sep=2pt, minimum size=12pt] (11v) at (4.4,1.8+1) {};
\node[circle,draw=black, fill=black, fill opacity = 1, inner sep=2pt, minimum size=12pt] (12v) at (5.9,1.4+1) {};
\node[circle,draw=black, fill=black, fill opacity = 1, inner sep=2pt, minimum size=12pt] (13v) at (4.7,-1.5+1) {};
\node[circle,draw=black, fill=black, fill opacity = 1, inner sep=2pt, minimum size=12pt] (14v) at (4.9,-0.7+1) {};
\node[circle,draw=black, fill=black, fill opacity = 1, inner sep=2pt, minimum size=12pt] (15v) at (5.9,-1.4+1) {};
\node[circle,draw=black, fill=lightgray, fill opacity = 1, inner sep=2pt, minimum size=12pt] (g1) at (7,0+1) {};

\node[circle,draw=none, fill=none, fill opacity = 0, inner sep=2pt, minimum size=12pt] (inv1) at (0.35-3.3,-6) {};
\node[circle,draw=none, fill=none, fill opacity = 0, inner sep=2pt, minimum size=12pt] (inv2) at (1.6-2,-6) {};
\node[circle,draw=none, fill=none, fill opacity = 0, inner sep=2pt, minimum size=12pt] (inv3) at (3,-6) {};
\node[circle,draw=none, fill=none, fill opacity = 0, inner sep=2pt, minimum size=12pt] (inv4) at (3+2,0.8-5) {};
\node[circle,draw=none, fill=none, fill opacity = 0, inner sep=2pt, minimum size=12pt] (inv5) at (4+3,0.9-5) {};
\node[circle,draw=none, fill=none, fill opacity = 0, inner sep=2pt, minimum size=12pt] (inv6) at (5,-7.4) {};
\node[circle,draw=none, fill=none, fill opacity = 0, inner sep=2pt, minimum size=12pt] (inv7) at (10,1-5) {};
\node[circle,draw=none, fill=none, fill opacity = 0, inner sep=2pt, minimum size=12pt] (inv8) at (8,-7.4) {};

\draw [-,    line width=8.5pt, lightgray, shorten <=-1pt, shorten >=-1pt] (inv1) to  (inv2);
\draw [-,    line width=8.5pt, lightgray, shorten <=-1pt, shorten >=-1pt] (inv2) to  (inv3);
\draw [-,    line width=8.5pt, lightgray, shorten <=-1pt, shorten >=-1pt] (inv3) to  (inv4);
\draw [-,    line width=8.5pt, lightgray, shorten <=-1pt, shorten >=-1pt] (inv4) to  (inv5);
\draw [-,    line width=8.5pt, lightgray, shorten <=-1pt, shorten >=-1pt] (inv3) to  (inv6);
\draw [-,    line width=8.5pt, lightgray, shorten <=-1pt, shorten >=-1pt] (inv5) to  (inv7);
\draw [-,    line width=8.5pt, lightgray, shorten <=-1pt, shorten >=-1pt] (inv6) to  (inv8);

\draw[rounded corners, fill={rgb:black,1;white,6}, fill opacity = 1] (-0.7-3.3, -1.4-6) rectangle (1.4-3.3, 1.4-6) {};
\node[circle,draw=black, fill=black, fill opacity = 1, inner sep=2pt, minimum size=12pt] (1w) at (-0.4-3.3,0-6) {};
\node[circle,draw=black, fill=black, fill opacity = 1, inner sep=2pt, minimum size=12pt] (2w) at (1-3.3,-1-6) {};
\node[circle, draw=black, fill=black, fill opacity = 1, inner sep=2pt, minimum size=12pt] (3w) at (1-3.3,1-6) {};

\draw[rounded corners, fill={rgb:black,1;white,6}, fill opacity = 1] (0.7-2, -1.4-6) rectangle (3-2, 1.6-6) {};
\node[circle,draw=black, fill=black, fill opacity = 1, inner sep=2pt, minimum size=12pt] (2p) at (1-2,-1-6) {};
\node[circle, draw=black, fill=black, fill opacity = 1, inner sep=2pt, minimum size=12pt] (3p) at (1-2,1-6) {};
\node[circle,draw=black, fill=black, fill opacity = 1, inner sep=2pt, minimum size=12pt] (4p) at (2-2, 1.3-6) {};
\node[circle,draw=black, fill=black, fill opacity = 1, inner sep=2pt, minimum size=12pt] (5p) at (2.5-2,-1.1-6) {};
\node[circle,draw=black, fill=black, fill opacity = 1, inner sep=2pt, minimum size=12pt] (6p) at (2.7-2,0.7-6) {};

\draw[rounded corners, fill={rgb:black,1;white,6}, fill opacity = 1] (2.2-0.7, -1.4-5.7) rectangle (4.3-0.7, 1-5.7) {};
\node[circle,draw=black, fill=black, fill opacity = 1, inner sep=2pt, minimum size=12pt] (5q) at (2.5-0.7,-1.1-5.7) {};
\node[circle,draw=black, fill=black, fill opacity = 1, inner sep=2pt, minimum size=12pt] (6q) at (2.7-0.7,0.7-5.7) {};
\node[circle,draw=black, fill=black, fill opacity = 1, inner sep=2pt, minimum size=12pt] (7q) at (4-0.7,0-5.7) {};
\node[circle,draw=black, fill=black, fill opacity = 1, inner sep=2pt, minimum size=12pt] (8q) at (3.5-0.7,-1-5.7) {};

\draw[rounded corners, fill={rgb:black,1;white,6}, fill opacity = 1] (2.15+2, -0.4-5) rectangle (4.1+2, 1.9-5) {};
\node[circle,draw=black, fill=black, fill opacity = 1, inner sep=2pt, minimum size=12pt] (7r) at (3.8+2,0-5) {};
\node[circle,draw=black, fill=black, fill opacity = 1, inner sep=2pt, minimum size=12pt] (6r) at (2.5+2,0.7-5) {};\node[circle,draw=black, fill=black, fill opacity = 1, inner sep=2pt, minimum size=12pt] (9r) at (3.0+2,1.6-5) {};

\draw[rounded corners, fill={rgb:black,1;white,6}, fill opacity = 1] (2.9+3.7, -0.3-5) rectangle (5.5+3.7, 2.1-5) {};
\node[circle,draw=black, fill=black, fill opacity = 1, inner sep=2pt, minimum size=12pt] (7s) at (4+3.7,0-5) {};\node[circle,draw=black, fill=black, fill opacity = 1, inner sep=2pt, minimum size=12pt] (9s) at (3.2+3.7,1.6-5) {};
\node[circle,draw=black, fill=black, fill opacity = 1, inner sep=2pt, minimum size=12pt] (10s) at (5.2+3.7, 0.2-5) {};
\node[circle,draw=black, fill=black, fill opacity = 1, inner sep=2pt, minimum size=12pt] (11s) at (4.4+3.7,1.8-5) {};

\draw[rounded corners, fill={rgb:black,1;white,6}, fill opacity = 1] (4.1+5.6, -0.1-5) rectangle (6.2+5.6, 2.1-5) {};
\node[circle,draw=black, fill=black, fill opacity = 1, inner sep=2pt, minimum size=12pt] (10t) at (5.2+5.6,0.2-5) {};
\node[circle,draw=black, fill=black, fill opacity = 1, inner sep=2pt, minimum size=12pt] (11t) at (4.4+5.6,1.8-5) {};
\node[circle,draw=black, fill=black, fill opacity = 1, inner sep=2pt, minimum size=12pt] (12t) at (5.9+5.6,1.4-5) {};

\draw[rounded corners, fill={rgb:black,1;white,6}, fill opacity = 1] (2.1+2, -1.8-6.5) rectangle (4.1+2, 0.3-6.5) {};
\node[circle,draw=black, fill=black, fill opacity = 1, inner sep=2pt, minimum size=12pt] (7u) at (2.9+2,0-6.5) {};
\node[circle,draw=black, fill=black, fill opacity = 1, inner sep=2pt, minimum size=12pt] (8u) at (2.4+2,-1-6.5) {};
\node[circle,draw=black, fill=black, fill opacity = 1, inner sep=2pt, minimum size=12pt] (13u) at (3.6+2,-1.5-6.5) {};
\node[circle,draw=black, fill=black, fill opacity = 1, inner sep=2pt, minimum size=12pt] (14u) at (3.8+2,-0.7-6.5) {};

\draw[rounded corners, fill={rgb:black,1;white,6}, fill opacity = 1] (4.4+2.2, -1.8-6.3) rectangle (6.2+2.2, -0.4-6.3) {};
\node[circle,draw=black, fill=black, fill opacity = 1, inner sep=2pt, minimum size=12pt] (13x) at (4.7+2.2,-1.5-6.3) {};
\node[circle,draw=black, fill=black, fill opacity = 1, inner sep=2pt, minimum size=12pt] (14x) at (4.9+2.2,-0.7-6.3) {};
\node[circle,draw=black, fill=black, fill opacity = 1, inner sep=2pt, minimum size=12pt] (15x) at (5.9+2.2,-1.4-6.3) {};


\draw [-,   thick,  lightgray, shorten <=-1pt, shorten >=-1pt] (g1) to  (15v);
\draw [-,   thick,  lightgray, shorten <=-1pt, shorten >=-1pt] (g1) to  (12v);
\draw [-,   thick,  lightgray, shorten <=-1pt, shorten >=-1pt] (14v) to  (10v);
\draw [-,   thick,  lightgray, shorten <=-1pt, shorten >=-1pt] (4v) to  (9v);
\draw [-,   thick,  lightgray, shorten <=-1pt, shorten >=-1pt] (6v) to  (1v);

\draw [-,    line width=1.5pt, black, shorten <=-1pt, shorten >=-1pt] (1v) to  (2v);
\draw [-,    line width=1.5pt, black, shorten <=-1pt, shorten >=-1pt] (1v) to  (3v);
\draw [-,    line width=1.5pt, black, shorten <=-1pt, shorten >=-1pt] (2v) to  (3v);
\draw [-,    line width=1.5pt, black, shorten <=-1pt, shorten >=-1pt] (3v) to  (4v);
\draw [-,    line width=1.5pt, black, shorten <=-1pt, shorten >=-1pt] (4v) to  (6v);
\draw [-,    line width=1.5pt, black, shorten <=-1pt, shorten >=-1pt] (6v) to  (5v);
\draw [-,    line width=1.5pt, black, shorten <=-1pt, shorten >=-1pt] (5v) to  (2v);
\draw [-,    line width=1.5pt, black, shorten <=-1pt, shorten >=-1pt] (5v) to  (8v);
\draw [-,    line width=1.5pt, black, shorten <=-1pt, shorten >=-1pt] (8v) to  (7v);
\draw [-,    line width=1.5pt, black, shorten <=-1pt, shorten >=-1pt] (7v) to  (6v);
\draw [-,    line width=1.5pt, black, shorten <=-1pt, shorten >=-1pt] (6v) to  (9v);
\draw [-,    line width=1.5pt, black, shorten <=-1pt, shorten >=-1pt] (7v) to  (9v);
\draw [-,    line width=1.5pt, black, shorten <=-1pt, shorten >=-1pt] (10v) to  (11v);
\draw [-,    line width=1.5pt, black, shorten <=-1pt, shorten >=-1pt] (7v) to  (10v);
\draw [-,    line width=1.5pt, black, shorten <=-1pt, shorten >=-1pt] (9v) to  (11v);
\draw [-,    line width=1.5pt, black, shorten <=-1pt, shorten >=-1pt] (12v) to  (10v);
\draw [-,    line width=1.5pt, black, shorten <=-1pt, shorten >=-1pt] (12v) to  (11v);
\draw [-,    line width=1.5pt, black, shorten <=-1pt, shorten >=-1pt] (15v) to  (14v);
\draw [-,    line width=1.5pt, black, shorten <=-1pt, shorten >=-1pt] (15v) to  (13v);
\draw [-,    line width=1.5pt, black, shorten <=-1pt, shorten >=-1pt] (13v) to  (14v);
\draw [-,    line width=1.5pt, black, shorten <=-1pt, shorten >=-1pt] (14v) to  (7v);
\draw [-,    line width=1.5pt, black, shorten <=-1pt, shorten >=-1pt] (13v) to  (8v);

\draw [-,    line width=1.5pt, black, shorten <=-1pt, shorten >=-1pt] (1w) to  (2w);
\draw [-,    line width=1.5pt, black, shorten <=-1pt, shorten >=-1pt] (1w) to  (3w);
\draw [-,    line width=1.5pt, black, shorten <=-1pt, shorten >=-1pt] (2w) to  (3w);

\draw [-,    line width=1.5pt, black, shorten <=-1pt, shorten >=-1pt] (2p) to  (3p);
\draw [-,    line width=1.5pt, black, shorten <=-1pt, shorten >=-1pt] (3p) to  (4p);
\draw [-,    line width=1.5pt, black, shorten <=-1pt, shorten >=-1pt] (4p) to  (6p);
\draw [-,    line width=1.5pt, black, shorten <=-1pt, shorten >=-1pt] (6p) to  (5p);
\draw [-,    line width=1.5pt, black, shorten <=-1pt, shorten >=-1pt] (5p) to  (2p);

\draw [-,   line width=1.5pt, black, shorten <=-1pt, shorten >=-1pt] (6q) to  (5q);
\draw [-,   line width=1.5pt, black, shorten <=-1pt, shorten >=-1pt] (5q) to  (8q);
\draw [-,    line width=1.5pt, black, shorten <=-1pt, shorten >=-1pt] (8q) to  (7q);
\draw [-,    line width=1.5pt, black, shorten <=-1pt, shorten >=-1pt] (7q) to  (6q);

\draw [-,   line width=1.5pt, black, shorten <=-1pt, shorten >=-1pt] (7r) to  (6r);
\draw [-,    line width=1.5pt, black, shorten <=-1pt, shorten >=-1pt] (6r) to  (9r);
\draw [-,   line width=1.5pt, black, shorten <=-1pt, shorten >=-1pt] (7r) to  (9r);

\draw [-,    line width=1.5pt, black, shorten <=-1pt, shorten >=-1pt] (7s) to  (9s);
\draw [-,    line width=1.5pt, black, shorten <=-1pt, shorten >=-1pt] (10s) to  (11s);
\draw [-,    line width=1.5pt, black, shorten <=-1pt, shorten >=-1pt] (7s) to  (10s);
\draw [-,    line width=1.5pt, black, shorten <=-1pt, shorten >=-1pt] (9s) to  (11s);

\draw [-,    line width=1.5pt, black, shorten <=-1pt, shorten >=-1pt] (10t) to  (11t);
\draw [-,    line width=1.5pt, black, shorten <=-1pt, shorten >=-1pt] (12t) to  (10t);
\draw [-,    line width=1.5pt, black, shorten <=-1pt, shorten >=-1pt] (12t) to  (11t);

\draw [-,    line width=1.5pt, black, shorten <=-1pt, shorten >=-1pt] (7u) to  (8u);
\draw [-,    line width=1.5pt, black, shorten <=-1pt, shorten >=-1pt] (13u) to  (14u);
\draw [-,    line width=1.5pt, black, shorten <=-1pt, shorten >=-1pt] (14u) to  (7u);
\draw [-,    line width=1.5pt, black, shorten <=-1pt, shorten >=-1pt] (13u) to  (8u);

\draw [-,    line width=1.5pt, black, shorten <=-1pt, shorten >=-1pt] (13x) to  (14x);
\draw [-,    line width=1.5pt, black, shorten <=-1pt, shorten >=-1pt] (15x) to  (14x);
\draw [-,    line width=1.5pt, black, shorten <=-1pt, shorten >=-1pt] (15x) to  (13x);

\node[circle,draw=black, fill=black, fill opacity = 1, inner sep=2pt, minimum size=12pt] (1v) at (-0.4,0+1) {};
\node[circle,draw=black, fill=black, fill opacity = 1, inner sep=2pt, minimum size=12pt] (2v) at (1,-1+1) {};
\node[circle, draw=black, fill=black, fill opacity = 1, inner sep=2pt, minimum size=12pt] (3v) at (1,1+1) {};
\node[circle,draw=black, fill=black, fill opacity = 1, inner sep=2pt, minimum size=12pt] (4v) at (2,1.3+1) {};
\node[circle,draw=black, fill=black, fill opacity = 1, inner sep=2pt, minimum size=12pt] (5v) at (2.5,-1.1+1) {};
\node[circle,draw=black, fill=black, fill opacity = 1, inner sep=2pt, minimum size=12pt] (6v) at (2.7,0.7+1) {};
\node[circle,draw=black, fill=black, fill opacity = 1, inner sep=2pt, minimum size=12pt] (7v) at (4,0+1) {};
\node[circle,draw=black, fill=black, fill opacity = 1, inner sep=2pt, minimum size=12pt] (8v) at (3.5,-1+1) {};
\node[circle,draw=black, fill=black, fill opacity = 1, inner sep=2pt, minimum size=12pt] (9v) at (3.2,1.6+1) {};
\node[circle,draw=black, fill=black, fill opacity = 1, inner sep=2pt, minimum size=12pt] (10v) at (5.2,0.2+1) {};
\node[circle,draw=black, fill=black, fill opacity = 1, inner sep=2pt, minimum size=12pt] (11v) at (4.4,1.8+1) {};
\node[circle,draw=black, fill=black, fill opacity = 1, inner sep=2pt, minimum size=12pt] (12v) at (5.9,1.4+1) {};
\node[circle,draw=black, fill=black, fill opacity = 1, inner sep=2pt, minimum size=12pt] (13v) at (4.7,-1.5+1) {};
\node[circle,draw=black, fill=black, fill opacity = 1, inner sep=2pt, minimum size=12pt] (14v) at (4.9,-0.7+1) {};
\node[circle,draw=black, fill=black, fill opacity = 1, inner sep=2pt, minimum size=12pt] (15v) at (5.9,-1.4+1) {};
\node[circle,draw=black, fill=lightgray, fill opacity = 1, inner sep=2pt, minimum size=12pt] (g1) at (7,0+1) {};

\draw [->,decorate,decoration={snake,amplitude= 1.3mm,segment length=6mm, post length=1.3mm}, line width=2.5pt, black] (3,-0.9) -- (3,-3.6);
\end{tikzpicture}
\caption{A tree is obtained from a set of cycle subgraphs. Any two of these cycles
have common edges if and only if they are adjacent in the tree, and if they do they have
a single common edge. In contrast, there is no restriction on the common vertices.
}
\label{fig:cycle_forest}
\end{figure}

\begin{theorem} [Cycle-Forest Bound for Network Dynamics] \label{thm:cycle_forest_bound}
Let~$c_1,\ldots,c_m$ be oriented cycles in~$G$ with no repeated edges.
Suppose that any two of these cycles have at most one common edge.
Let~$F$ be the graph with vertices~$c_1,\ldots,c_m$ and edges the pairs~$\{c_i,c_j\}$
such that~$c_i$ and~$c_j$ have common edges. Moreover, suppose that the graph~$F$ is a forest
with no isolated vertices.
Let~$L\subseteq F$ be obtained by choosing from each component of~$F$ all leaves except one.
Then the set of equilibria~$X$ of the dynamical system~\eqref{eq:network_dynamics_bis}
satisfies
\[
	\andim(X) \leq \dim_{\R} \homol_0(G) + \dim_{\R} \homol_1(G) - \abs{F} + \abs{L}.
\]
\end{theorem}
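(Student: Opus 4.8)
The plan is to deduce the statement from the Pure Forest Bound (Theorem~\ref{thm:pure_forest_bound}) applied to the auxiliary forest $F$, after passing from the vertex variables $x_v$ to the flow variables $z = f(B^t x)$.

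First I would control the fibres of the flow map. Writing the equilibrium condition as $Bz = \omega$ with $z = f(B^t x)$, consider $\Phi\colon X \to \{z \in \R^E : Bz = \omega\}$, $x \mapsto f(B^t x)$, and set $Z = \Phi(X)$. Since each $f_e$ is finite-to-one and $\ker B^t = \homol_0(G)$, every fibre $\Phi^{-1}(z)$ is a finite union of cosets of $\homol_0(G)$; arguing as in Proposition~\ref{prop:analytic_dimension} (constant rank on the top-dimensional stratum) this gives $\andim(X) \le \dim_\R \homol_0(G) + \andim(Z)$. As $Z$ lies in the affine space $\{Bz = \omega\}$, modelled on $\homol_1(G) = \ker B$, it then suffices to prove the combinatorial bound $\cdim(Z) \le \dim_\R \homol_1(G) - \abs{F} + \abs{L}$ and use $\andim(Z) \le \cdim(Z)$.

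Next I would coordinatize $\homol_1(G)$ using the chosen cycles. Because $F$ is a forest, the $c_1,\dots,c_m$ are linearly independent in $\homol_1(G)$: a leaf cycle of $F$ meets its unique neighbour in a single edge, hence has private edges, so its coefficient in any dependence vanishes, and one removes it and induces. Thus $C = \langle c_1,\dots,c_m\rangle$ has dimension $m = \abs{F}$; fixing a complement $W$, I use coordinates $z = z_0 + \sum_i \alpha_i c_i + w$ with $w \in W$ and $\dim_\R W = \dim_\R \homol_1(G) - \abs{F}$. In these coordinates I would show that $Z$, read through $(\alpha_1,\dots,\alpha_m)$ and relative to the constants $w$, is $d'$-compatible with $F$; this is the heart of the argument. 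For a flow over a point of $X$ the voltage law $c_i^{\,t} y = 0$ holds, where $y = B^t x$ is the actual difference vector (so $y_e \in f_e^{-1}(z_e)$), and this relation is \emph{linear} in the variables $y_e$, $e \in c_i$. Now $z_e$ depends on $\alpha_i$, on $w$, and on $\alpha_k$ only when $e \in c_k$; since $F$ is a forest it is triangle-free, so each edge of $c_i$ lies in at most one further chosen cycle, necessarily an $F$-neighbour, and a neighbour $c_j$ reaches the edges of $c_i$ only through the single shared edge $s_{ij}$. Hence fixing $\alpha_i$, the remaining neighbours' coefficients, and $w$ determines the flows, and so (up to $d$ branches each) the differences $y_e$, on every edge of $c_i$ except $s_{ij}$; the voltage law then pins down $y_{s_{ij}}$ uniquely, whence $z_{s_{ij}} = f_{s_{ij}}(y_{s_{ij}})$ and finally $\alpha_j$ take only finitely many values. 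This is exactly $\alpha_{(N_F(c_i)\setminus\{c_j\})\cup\{c_i\}} \fin^{d'}_{W} \alpha_{c_j}$, i.e.\ $d'$-compatibility of the $\alpha$-set with $F$ relative to $W$.

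With compatibility established I would invoke the Pure Forest Bound (Theorem~\ref{thm:pure_forest_bound}) for $F$, which by hypothesis has no isolated vertices, to obtain $\alpha_L \fin \alpha_{F\setminus L}$ for each fixed $w$; thus fixing the $W$-coordinates together with $\{\alpha_\ell : \ell \in L\}$ leaves finitely many flows $z$. This gives $\cdim(Z) \le \dim_\R W + \abs{L} = \dim_\R \homol_1(G) - \abs{F} + \abs{L}$, which combined with the first step proves the theorem. I expect the main obstacle to be the compatibility step: one must check that precisely one edge of $c_i$ remains undetermined — this is where both the single-common-edge hypothesis and the triangle-freeness of the forest $F$ are used — and one must express the Kirchhoff relation linearly in the differences $y_e$ rather than in the flows $z_e$, since it is this linearity (combined with the finite-to-one $f_{s_{ij}}$) that turns the relation into a genuine finite-determinacy statement; the linear independence of the cycles, again a consequence of $F$ being a forest, is what makes the $\alpha_i$ honest coordinates and yields the clean count $\dim_\R W = \dim_\R\homol_1(G) - \abs{F}$.
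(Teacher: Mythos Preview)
Your proposal is correct and follows essentially the same route as the paper: pass from vertex variables to cycle-space coordinates via the flow map, use the forest structure of $F$ to prove linear independence of $c_1,\ldots,c_m$, verify compatibility of the resulting system with $F$ (relative to the remaining coordinates) by combining the Kirchhoff voltage law $c_i^{\,t}y=0$ with the edgewise relation expressing each $z_e$ in terms of the cycle coefficients, and then invoke the Pure Forest Bound. The only cosmetic difference is that the paper extends $c_1,\ldots,c_m$ to a full basis of $\homol_1(G)$ and packages the extra coordinates as the vertices $H\setminus F$ of an auxiliary graph $H$, whereas you use an abstract complementary subspace $W$; the two bookkeepings are equivalent.
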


Before proving Theorem~\eqref{thm:cycle_forest_bound}, we discuss a concrete
example. Figure~\eqref{fig:cycle_forest} shows how a forest~$F$ (in this case a tree)
is obtained from certain cycles~$c_1,\ldots,c_8$ of a graph~$G$.
In this particular case we have~$\dim_{\R} \homol_0(G)=1$,
$\dim_{\R} \homol_1(G)=12$, $\abs{F}=8$, and~$\abs{L}=2$,
thus we obtain the upper bound~$7$.

\begin{proof} [Proof of Theorem~\eqref{thm:cycle_forest_bound}]
Throughout the proof we denote~$\homol_0 = \homol_0(G)$ and~$\homol_1=\homol_1(G)$.

By Proposition~\ref{prop:analytic_dimension}, it is enough to prove the upper bound
for the combinatorial dimension~$\cdim(X)$. Recall that the combinatorial
dimension of a set~$X\subseteq \R^V$ is not intrinsic: It depends on the ambient space~$\R^V$.
The idea of the proof is to replace the set of equilibria~$X$ in ``vertex space''
by a certain set~$Z$ in ``cycle space'', and to relate the combinatorial dimensions of~$X$ and~$Z$
in their corresponding ambient spaces.

By definition~$X=\{x\in \Gamma^V : Bf(B^t x) = \omega\}$.
If~$\omega\notin \im B$, then~$X$ is empty and the statement trivial.
Suppose that~$\omega = B\xi$ for some~$\xi\in \R^E$. Fix one such~$\xi$ for the rest of the proof.
A simple inductive argument shows that~$c_1,\ldots,c_m$ are linearly independent
as vectors in~$\R^E$.
Complete~$c_1,\ldots,c_m$ to a basis~$c_1,\ldots,c_{\abs{\homol_1}}$
of~$\homol_1\subseteq \R^E$ and let~$C = \{c_1,\ldots,c_{\abs{\homol_1}}\}$.
Consider the set
\[
	Z = \{z\in \R^C \mid
		z_{c_1} c_1 + \cdots z_{c_{\abs{\homol_1}}} c_{\abs{\homol_1}}
			+ \xi \in f(\im B^T) \}.
\]
Note that if~$z\in Z$ is given, there are at most finitely many~$y\in \im B^T$ such that
\[
	z_{c_1} c_1 + \cdots z_{c_{\abs{\homol_1}}} c_{\abs{\homol_1}} + \xi = f(y).
\]
Fix one such~$y$ for the rest of the proof.
Since~$y \in \im B^T$, there is~$x \in \Gamma^V$ such that~$B^T x = y$. There are infinitely
many such~$x$, since~$\dim_{\Gamma} \ker B^T = c(G) = \dim_{\R} \homol_0$.
However, there is only one such~$x$ once~$x_v$ is fixed for
a vertex~$v$ in each connected component.
Therefore
\[
	\cdim(X) \leq \dim_{\R} \homol_0 + \cdim(Z)
\]
where~$\cdim(X)$ is the combinatorial dimension of~$X$ as a subset of~$\Gamma^V$
and~$\cdim(Z)$ is the combinatorial dimension of~$Z$ as a subset of~$\R^C$.
It remains to bound~$\cdim(Z)$.

Let~$H$ be the graph with vertex set~$C=\{c_1,\ldots,c_{\abs{\homol_1}}\}$
and edges those pairs~$\{c_i,c_j\}$ such that~$c_i$ and~$c_j$ have common edges
as cycles in~$G$. By hypothesis~$F$ is an induced subgraph of~$H$.
We claim that~$Z$ is compatible with~$F$ relative to~$H\setminus F$. Note that
Lemma~\ref{lem:relative_induced} cannot be applied here since~$Z$
might not be compatible with~$H$.
Instead, we prove the claim directly using two identities. First, for every edge~${e_j}\in E$
we have
\begin{equation} \label{eq:hom_edges}
	z_{c_1} c_{1,{e_j}} + \cdots z_{c_{\abs{\homol_1}}} c_{\abs{\homol_1},{e_j}}
		+ \xi_{e_j} = f_{e_j}(y_{e_j}).
\end{equation}
Recall that~$c_{i,e_j}=+1,-1,0$ according to whether the cycle~$c_i$ traverses~$e_j$ according
to its orientation, in the opposite orientation, or does not contain~$e_j$.
Second, for every cycle~$c_i\in C$, since~$y\in \im B^T = \ker B^\perp$, we have
\begin{equation} \label{eq:hom_cycles}
	\sum_{e_j\in E} c_{i,e_j} y_{e_j} = 0.
\end{equation}
Fix~$z_{H\setminus F} = (z_{c_{m+1}},\ldots,z_{c_{\abs{\homol_1}}})$ arbitrarily.
Let~$c_p \in F$ and~$c_q\in N_F(c_p)$. We need to prove
\begin{equation} \label{eq:hom_determined}
	(z_{c_k})_{c_k \in N_F(c_p) \setminus \{c_q\} \cup \{c_p\}} \fin_{H\setminus F} z_{c_q}.
\end{equation}
By hypothesis the elements~$c_p$ and~$c_q$,
seen as cycles in~$G$, intersect in exactly one edge, say~$e_{pq}$.
From~\eqref{eq:hom_edges} with~$e_{j}=e_{pq}$ it follows that~$z_{c_q}$
is uniquely determined by~$z_{c_p}$, by~$y_{e_{pq}}$,
by~$z_{H\setminus F}$ and by~$\xi_{e_{pq}}$.
Therefore, it is enough to show that~$y_{e_{pq}}$
is determined by~$z_{N_F(c_p)\setminus \{c_q\} \cup \{c_p\}}$.
From~\eqref{eq:hom_cycles} with~$c_i=c_{p}$ it follows that~$y_{e_{pq}}$
is uniquely determined by~$(y_e)_{e\in c_p\setminus \{e_{pq}\}}$
where~$e\in c_p\setminus \{e_{pq}\}$ ranges among the edges traversed by the cycle~$c_p$
excluded the edge~$e_{pq}$. In turn, from~\eqref{eq:hom_edges} applied to
each~$e\in c_p\setminus \{e_{pq}\}$, since none of these~$e$ is an edge of~$c_q$,
it follows that~$(y_e)_{e\in c_p\setminus \{e_{pq}\}}$ is finitely determined
by~$z_{N_F(c_p)\setminus \{c_q\} \cup \{c_p\}}$.
This concludes the proof of~\eqref{eq:hom_determined}.

We proved that~$Z$ is compatible with~$F$
relative to~$H\setminus F$. Theorem~\ref{thm:pure_forest_bound} now implies
\[
	z_L \fin_{H\setminus F} z_{F\setminus L},
\]
which is the same as~$z_{L\cup (H\setminus F)}\fin z_{F\setminus L}$,
from which
\[
	\cdim(Z)\leq \abs{H} -\abs{F} + \abs{L},
\]
thus concluding the proof.
\end{proof}


\section{Acknowledgments}
We express our gratitude to Thomas Rot for suggesting
a proof of Proposition~\ref{prop:analytic_dimension} that avoids
the notion of covering dimension, and to Lies Beers and Raffaella Mulas for precious
discussions about Laplacian eigenvalues.


\bibliographystyle{siam}
\bibliography{refs}

\begin{thebibliography}{10}

\bibitem{acebron2005kuramoto}
{\sc J.~A. Acebr{\'o}n, L.~L. Bonilla, C.~J.~P. Vicente, F.~Ritort, and
  R.~Spigler}, {\em The kuramoto model: A simple paradigm for synchronization
  phenomena}, Reviews of modern physics, 77 (2005), p.~137.

\bibitem{ashwin2016identical}
{\sc P.~Ashwin, C.~Bick, and O.~Burylko}, {\em Identical phase oscillator
  networks: Bifurcations, symmetry and reversibility for generalized coupling},
  Frontiers in Applied Mathematics and Statistics, 2 (2016), p.~7.

\bibitem{Chen2019}
{\sc Z.~Chen, Y.~Zou, S.~Guan, Z.~Liu, and J.~Kurths}, {\em {Fully solvable
  lower dimensional dynamics of Cartesian product of Kuramoto models}}, New
  Journal of Physics, 21 (2019).

\bibitem{chung1997spectral}
{\sc F.~R. Chung}, {\em Spectral graph theory}, vol.~92, American Mathematical
  Soc., 1997.

\bibitem{devriendt2021nonlinear}
{\sc K.~Devriendt and R.~Lambiotte}, {\em Nonlinear network dynamics with
  consensus--dissensus bifurcation}, Journal of Nonlinear Science, 31 (2021),
  p.~18.

\bibitem{diestel2024graph}
{\sc R.~Diestel}, {\em Graph theory}, Springer (print edition); Reinhard
  Diestel (eBooks), 2024.

\bibitem{godsil2001algebraic}
{\sc C.~Godsil and G.~F. Royle}, {\em Algebraic graph theory}, vol.~207,
  Springer Science \& Business Media, 2001.

\bibitem{grone1990laplacian}
{\sc R.~Grone, R.~Merris, and V.~S. Sunder}, {\em The laplacian spectrum of a
  graph}, SIAM Journal on matrix analysis and applications, 11 (1990),
  pp.~218--238.

\bibitem{hatcher2005algebraic}
{\sc A.~Hatcher}, {\em Algebraic topology}, 2005.

\bibitem{homs2021nonlinear}
{\sc M.~Homs-Dones, K.~Devriendt, and R.~Lambiotte}, {\em Nonlinear consensus
  on networks: Equilibria, effective resistance, and trees of motifs}, SIAM
  Journal on Applied Dynamical Systems, 20 (2021), pp.~1544--1570.

\bibitem{Jafarian2018}
{\sc M.~Jafarian, X.~Yi, M.~Pirani, H.~Sandberg, and K.~H. Johansson}, {\em
  Synchronization of kuramoto oscillators in a bidirectional
  frequency-dependent tree network}, in 2018 IEEE Conference on Decision and
  Control (CDC), IEEE, 2018, pp.~4505--4510.

\bibitem{johnson2006eigenvalues}
{\sc C.~R. Johnson, A.~L. Duarte, C.~M. Saiago, and D.~Sher}, {\em Eigenvalues,
  multiplicities and graphs}, Contemporary Mathematics, 419 (2006), p.~167.

\bibitem{Ling2019}
{\sc S.~Ling, R.~Xu, and A.~S. Bandeira}, {\em {On the landscape of
  synchronization networks: A perspective from nonconvex optimization}}, SIAM
  Journal on Optimization, 29 (2019), pp.~1879--1907.

\bibitem{lu2020}
{\sc J.~Lu and S.~Steinerberger}, {\em Synchronization of kuramoto oscillators
  in dense networks}, Nonlinearity, 33 (2020), p.~5905.

\bibitem{massey2007notes}
{\sc D.~B. Massey and D.~T. L{\^e}}, {\em Notes on real and complex analytic
  and semianalytic singularities}, in Singularities in geometry and topology,
  World Scientific, 2007, pp.~81--126.

\bibitem{narasimhan2006introduction}
{\sc R.~Narasimhan}, {\em Introduction to the theory of analytic spaces},
  vol.~25, Springer, 2006.

\bibitem{sclosa2024completely}
{\sc D.~Sclosa}, {\em Completely degenerate equilibria of the kuramoto model on
  networks}, Nonlinearity, 37 (2024), p.~095021.

\bibitem{smith1970some}
{\sc J.~H. Smith}, {\em Some properties of the spectrum of a graph},
  Combinatorial Structures and their applications,  (1970), pp.~403--406.

\bibitem{srivastava2011bifurcations}
{\sc V.~Srivastava, J.~Moehlis, and F.~Bullo}, {\em On bifurcations in
  nonlinear consensus networks}, Journal of Nonlinear Science, 21 (2011),
  pp.~875--895.

\bibitem{strogatz2000kuramoto}
{\sc S.~H. Strogatz}, {\em From kuramoto to crawford: exploring the onset of
  synchronization in populations of coupled oscillators}, Physica D: Nonlinear
  Phenomena, 143 (2000), pp.~1--20.

\bibitem{whitney1972complex}
{\sc H.~Whitney, H.~Whitney, H.~Whitney, and H.~Whitney}, {\em Complex analytic
  varieties}, vol.~131, Addison-Wesley Reading, 1972.

\bibitem{Wiley2006}
{\sc D.~A. Wiley, S.~H. Strogatz, and M.~Girvan}, {\em The size of the sync
  basin}, Chaos: An Interdisciplinary Journal of Nonlinear Science, 16 (2006),
  p.~015103.

\end{thebibliography}

\end{document}